\newcommand{\N}{\ensuremath{\mathbb{N}}}
\newcommand{\T}{\ensuremath{\mathbb{T}}}
\newcommand{\B}{\ensuremath{\mathbb{B}}}
\newcommand{\Z}{\ensuremath{\mathbb{Z}}}
\newcommand{\R}{\ensuremath{\mathbb{R}}}
\newcommand{\C}{\ensuremath{\mathbb{C}}}
\newcommand{\ii}{\textnormal{i}}
\newcommand{\eip}[1]{\textnormal{e}^{2\pi\ii{#1}}}
\newcommand{\eim}[1]{\textnormal{e}^{-2\pi\ii{#1}}}
\newcommand{\norm}[1]{\left\Vert #1\right\Vert}
\newcommand{\floor}[1]{\left\lfloor#1\right\rfloor}
\newcommand{\ceil}[1]{\left\lceil#1\right\rceil}
\newcommand{\pmat}[1]{\begin{pmatrix} #1 \end{pmatrix}}
\newcommand{\set}[1]{\left\{ #1 \right\}}
\newcommand{\inter}[1]{\left( #1 \right)}
\newcommand{\interc}[1]{\left[ #1 \right]}
\newcommand{\intercl}[1]{\left[ #1 \right)}
\newcommand{\abs}[1]{\left| #1 \right |}
\newcommand{\dirich}[1]{\ifthenelse{\isempty{#1}}
	{D_n}
	{D_n\left(#1\right)} }		
\newcommand{\dirichd}[1]{\ifthenelse{\isempty{#1}}
	{D_n'}
	{D_n'\left(#1\right)} }	
\newcommand{\dirichdd}[1]{\ifthenelse{\isempty{#1}}
	{D_n''}
	{D_n''\left(#1\right)} }	
\newcommand{\dirichm}[1]{\ifthenelse{\isempty{#1}}
	{\tilde D_n}
	{\tilde D_n\left(#1\right)} }
\DeclareMathOperator*{\lmax}{\lambda_{\max}}
\DeclareMathOperator*{\lmin}{\lambda_{\min}}
\DeclareMathOperator*{\smax}{\sigma_{\max}}
\DeclareMathOperator*{\smin}{\sigma_{\min}}
\DeclareMathOperator*{\cond}{cond}
\newtheorem{thm}{Theorem}[section]
\newtheorem{lemma}[thm]{Lemma}
\newtheorem{remark}[thm]{Remark}
\newtheorem{definition}[thm]{Definition}
\newtheorem{example}[thm]{Example}
\newtheorem{corollary}[thm]{Corollary}
\newtheorem{proposition}[thm]{Proposition}
\numberwithin{equation}{section}
\numberwithin{table}{section}
\numberwithin{figure}{section}
\newcommand{\bend}{\hspace*{0ex} \hfill \hbox{\vrule height
    1.5ex\vbox{\hrule width 1.4ex \vskip 1.4ex\hrule  width 1.4ex}\vrule
    height 1.5ex}}
\long\def\symbolfootnote[#1]#2{\begingroup%
\def\thefootnote{\fnsymbol{footnote}}\footnote[#1]{#2}\endgroup}
\renewcommand{\mathbf}[1]{\ensuremath{\boldsymbol{#1}}}
\renewcommand{\thefootnote}{\fnsymbol{footnote}}
\title{On the condition number of Vandermonde matrices with pairs of nearly-colliding nodes}
\date{\today}
\author{Stefan Kunis\footnotemark[1] \qquad Dominik Nagel\footnotemark[1]}
\newif\ifshow
\begin{document}

\maketitle

\begin{abstract}
We prove upper and lower bounds for the spectral condition number of rectangular Vandermonde matrices with nodes on the complex unit circle.
The nodes are ``off the grid'', pairs of nodes nearly collide, and the studied condition number grows linearly with the inverse separation distance.
Such growth rates are known in greater generality if all nodes collide or for groups of colliding nodes.
For pairs of nodes, we provide reasonable sharp constants that are independent of the number of nodes as long as non-colliding nodes are well-separated.
\medskip

\noindent\textit{Key words and phrases}:
Vandermonde matrix,
colliding nodes,
condition number,
frequency analysis,
super resolution
\medskip

\noindent\textit{2010 AMS Mathematics Subject Classification} : \text{
15A18, 
65T40, 
42A15  
}
\end{abstract}

\footnotetext[1]{
  Osnabr\"uck University, Institute of Mathematics
  \texttt{\{skunis,dnagel\}@uos.de}
}

\section{Introduction}  

Vandermonde matrices with complex nodes appear in polynomial interpolation problems and many other fields of mathematics, see e.g.~the introduction of \cite{AuBo17} and its references.
In this paper, we are interested in rectangular Vandermonde matrices with nodes on the complex unit circle and with a large polynomial degree.
These matrices generalize the classical discrete Fourier matrices to non-equispaced nodes and the involved polynomial degree is also called bandwidth.
The condition number of those matrices has recently become important in the context of stability analysis of super-resolution algorithms like Prony's method \cite{Pr95,KuMoPeOh17}, the matrix pencil method \cite{HuSa90,Mo15}, the ESPRIT algorithm \cite{RoKa89,PoTa2017}, and the MUSIC algorithm \cite{Sc86,LiFan2016}.
If the nodes of such a Vandermonde matrix are all well-separated, with minimal separation distance greater than the inverse bandwidth, bounds on the condition number are established for example in \cite{Ba99,KuPo07,Mo15,AuBo17}.

If nodes are nearly-colliding, i.e.~their distance is smaller than the inverse bandwidth, the behavior of the condition number is not yet fully understood.
The seminal paper \cite{Do92} coined the term (inverse) super-resolution factor for the product of the bandwidth and the separation distance of the nodes.
For $M$ nodes on a grid, the results in \cite{Do92,DeNg15} imply that the condition number grows like the super-resolution factor raised to the power of $M-1$ if \emph{all} nodes nearly collide.
More recently, the practically relevant situation of groups of nearly-colliding nodes was studied in \cite{CaMo16,GoYo17,LiLi17,BaDeGoYo18}.
In different setups and oversimplifying a bit, all of these refinements are able to replace the exponent $M-1$ by the smaller number $m-1$, where $m$ denotes the number of nodes that are in the largest group of nearly-colliding nodes.
The authors of \cite{CaMo16,GoYo17} focus on quite specific quantities in an optimization approach and in the so-called Prony mapping, respectively.
In contrast, the condition number or the relevant smallest singular value of Vandermonde matrices with ``off the grid'' nodes on the unit circle is studied in \cite{LiLi17,BaDeGoYo18}.
While \cite{BaDeGoYo18} provided the exponent $m-1$ for the first time, the proof technique leads to quite pessimistic constants and more restrictively asks all nodes (including the well-separated ones) to be within a
tiny arc of the unit circle.
More recently, the second version of \cite{LiLi17} provided a quite general framework and reasonable sharp constants, but involves a technical condition which prevents the separation distance from going to zero for a fixed number of nodes and a fixed bandwidth.

Here we present upper and lower bounds for the condition number of Vandermonde matrices with pairs of nearly-colliding nodes, i.e., the special case $m=2$.
We achieve the expected linear order and all constants are reasonable sharp and absolute.
In contrast to the more general quoted results \cite{BaDeGoYo18,LiLi17}, the nodes can be placed on the full unit circle and the separation distance is allowed to approach zero.
Our mild technical conditions, which seem to be artifacts of our proof technique, are i) a logarithmic growth in the separation distance of the well-separated nodes, ii) a uniformity condition that colliding nodes behave similarly, and iii) an a-priori upper bound on the separation distance of the colliding nodes. 

The outline of this paper is as follows:
Section 2 fixes the notation, recalls results for the case of well-separated nodes, and provides lower bounds for the condition number.
In Section 3, we establish upper bounds for nodes that are well-separated from each other except for one pair of nodes that is nearly-colliding.
Section 4 goes one step further and studies the more general case where an arbitrary number of pairs of nodes nearly collide.
Theoretical and numerical comparison with \cite{BaDeGoYo18,LiLi17} can be found at the end of Section 4 and in Section 5, respectively.

\section{Preliminaries}\label{ch:preliminaries}  
Let $\T:=\set{z\in\C\colon \abs{z}=1}$ be the complex torus and nodes $\set{z_1,\dots,z_M} \subset \T$ be parametrized by $z_j= \eim{t_j}, j=1\dots,M$, such that $t_1< \cdots< t_M \in \intercl{0,1}$.
We fix a degree $n \in \N$ so that $N:=2n+1 > M$ and set up the rectangular Vandermonde matrix
\begin{align}\label{eq:vandermonde}
	A:= \pmat{z_j^k}_{\substack{j=1,\dots,M \\ \abs{k}\le n}} = \pmat{z_1^{-n}& \cdots & z_1^{-1} & 1 & z_1^1 & \cdots & z_1^n\\ \vdots & & \vdots & & \vdots \\ z_M^{-n}& \cdots & z_M^{-1} & 1 & z_M^1 & \cdots & z_M^n} \in \C^{M\times N}.
\end{align}
The Dirichlet kernel $D_n \colon \R\rightarrow \R$ is given by
\begin{equation}\label{eq:Dn}
 D_n(t):= \sum_{k=-n}^{n}\eip{kt} =
  \begin{cases}
    N, & t\in \Z,\\
    \frac{\sin(N\pi t)}{\sin(\pi t)}, & \text{otherwise},
  \end{cases}
\end{equation}
so that
\begin{equation*}
K := AA^*= \pmat{\dirich{t_i-t_j}}_{i,j=1}^M \in \R^{M\times M}.
\end{equation*}
The matrix $K$ is symmetric positive definite and the spectral condition number
\begin{equation*}
	\cond(A) := \frac{\smax(A)}{\smin(A)}=\sqrt{\norm{K}\norm{K^{-1}}}
\end{equation*}
is finite since all nodes are distinct (here and throughout the paper $\|K\|:=\sup\{\|Kx\|:\;\|x\|=1\}$ with $\|x\|^2:=\sum_{k}|x_k|^2$).
On the other hand, if two nodes are equal, then two rows of $A$ are the same and by continuity the condition number diverges if two nodes collide.
The (wrap around) distance of two nodes is given by
\begin{equation*}
	\abs{t_j-t_\ell}_\T:= \min_{r\in \Z}\abs{t_j-t_\ell+r}.
\end{equation*}
and we introduce the \emph{normalized separation distance} of the node set as
\begin{equation*}
	\tau := N \min_{j\ne\ell}\abs{t_j-t_\ell}_\T.
\end{equation*}
We call the case $\tau=1$ \emph{critical separation}, i.e.~$\min_{j\ne\ell}\abs{t_j-t_\ell}_\T = \frac{1}{N}$, and
the cases $\tau \le 1$ and $\tau>1$ \emph{nearly-colliding} and \emph{well-separated}, respectively. 
Figure \ref{fig:4nodes} illustrates the situation for 4 nodes on the unit circle. The parameter $\rho_{min}$ describes a minimum separation distance of involved non-colliding nodes assumed in the Theorems.

\begin{figure}[h!]
\centering
	\begin{subfigure}{0.32\textwidth}
	 \begin{tikzpicture}[scale=2.5]
	 \tiny
	 \tikzmath{\x1 = .1; \x2 =.25; \x3 = .3; \x4 = 1; \y1=\x4-\x1; \y2=\x4-\x2; \y3=\x4-\x3;} 
	 \draw [draw=black,pattern=dots,pattern color=black] (\x3,0) rectangle (\y3,\x2);
	 \draw [draw=black,pattern=dots,pattern color=black] (0,\x3) rectangle (\x2,\y3);
	 \draw [draw=black,pattern=north east lines,pattern color=black] (0,\y2)--(0,\x4)--(\x2,\y2)--(0,\y2);
	 \draw [draw=black,pattern=north east lines,pattern color=black] (\y2,0)--(\x4,0)--(\y2,\x2)--(\y2,0);
	 \draw [draw=black,fill=gray] (\x2,\x2)--(\y2,\x2)--(\x2,\y2)--(\x2,\x2);
	 \draw (\x2,-.02) -- (\x2,.02) node[below=3pt] {$1$};
	 \draw (\x3,-.02) -- (\x3,.02) node[below=10pt,right=-4pt] {$\rho_{\min}$};
	 \draw (\x4,-.02) -- (\x4,.02) node[below=4pt] {$\frac{N}{2}$};
	 \draw (-.02,\x2) -- (.02,\x2) node[left=2pt] {$1$};
	 \draw (-.04,\x3) -- (.02,\x3) node[left=8pt] {$\rho_{\min}$};
	 \draw (-.02,\x4) -- (.02,\x4) node[left=4pt] {$\frac{N}{2}$};
	 \draw (\y2,-.02) -- (\y2,.02) node[above=4pt] {};
	 \draw (\y3,-.02) -- (\y3,.02) node[above=4pt] {};
	 \draw (-.02,\y2) -- (.02,\y2) node[right=4pt] {};
	 \draw (-.02,\y3) -- (.02,\y3) node[right=4pt] {};
	 \draw (0,\x4) -- (\x4,0);
	 \draw[->,thick] (0,0) -- (1.1,0) node[above=4pt, right=-8pt] {$\tau_{1,2}$};
	 \draw[->,thick] (0,0) -- (0,1.1) node[above=-3pt, right=1pt] {$\tau_{3,4}$};
	 \end{tikzpicture}
		\caption{Theorem \ref{thm:2collup}}
	\end{subfigure}
	\begin{subfigure}{0.32\textwidth}
	 \begin{tikzpicture}[scale=2.5]
	 \tiny
		\tikzmath{\x1 = .15; \x2 =.25; \x3 = .35; \x4 = 1; \y1=\x4-\x1; \y2=\x4-\x2; \y3=\x4-\x3;} 
		\draw [draw=black,pattern=dots,pattern color=black](0,0) rectangle (\x1,\x1);
		\draw [draw=black,pattern=dots,pattern color=black] (\x3,0) rectangle (\y3,\x1);
		\draw [draw=black,pattern=dots,pattern color=black] (0,\x3) rectangle (\x1,\y3);
		\draw [draw=black,pattern=north east lines,pattern color=black] (\y2,0)--(\x4,0)--(\y2,\x2)--(\y2,0);
		\draw [draw=black,pattern=north east lines,pattern color=black] (0,\y2)--(0,\x4)--(\x2,\y2)--(0,\y2);
		\draw [draw=black,fill=gray] (\x2,\x2) -- (\y2,\x2) -- (\x2,\y2) -- (\x2,\x2);
		\draw (\x1,-.02) -- (\x1,.02) node[below=10pt,left=-6pt] {$\tau_{max}$}; 
		\draw (\x2,-.02) -- (\x2,.02) node[below=3pt] {$1$};
		\draw (\x3,-.02) -- (\x3,.02) node[below=10pt,right=-4pt] {$\rho_{\min}$};
		\draw (\x4,-.02) -- (\x4,.02) node[below=4pt] {$\frac{N}{2}$};
		\draw (-.04,\x1) -- (.02,\x1) node[left=8pt] {$\tau_{\max}$};
		\draw (-.02,\x2) -- (.02,\x2) node[left=2pt] {$1$};
		\draw (-.04,\x3) -- (.02,\x3) node[left=8pt] {$\rho_{\min}$};
		\draw (-.02,\x4) -- (.02,\x4) node[left=4pt] {$\frac{N}{2}$};
		\draw (\y1,-.02) -- (\y1,.02) node[above=4pt] {};
		\draw (\y2,-.02) -- (\y2,.02) node[above=4pt] {};
		\draw (\y3,-.02) -- (\y3,.02) node[above=4pt] {};
		\draw (-.02,\y1) -- (.02,\y1) node[right=4pt] {};
		\draw (-.02,\y2) -- (.02,\y2) node[right=4pt] {};
		\draw (-.02,\y3) -- (.02,\y3) node[right=4pt] {};
		\draw (0,\x4) -- (\x4,0);
		\draw[->,thick] (0,0) -- (1.1,0) node[above=4pt, right=-8pt] {$\tau_{1,2}$};
		\draw[->,thick] (0,0) -- (0,1.1) node[above=-3pt, right=1pt] {$\tau_{3,4}$};
		\end{tikzpicture}
		\caption{Theorem \ref{thm:pairwUp}}
	\end{subfigure}
	\begin{subfigure}{0.32\textwidth}
	 \begin{tikzpicture}[scale=2.5]
	 \tiny
		\tikzmath{\x1 = .15; \x2 =.25; \x3 = .36; \x4 = 1; \y1=\x4-\x1; \y2=\x4-\x2; \y3=\x4-\x3;} 
		\node at (0.2*\x2/10,0.2*\x2/10) [circle,fill,inner sep=0.4pt]{};
		\node at (\x2/10,\x2/10) [circle,fill,inner sep=0.4pt]{};
		\node at (2*\x2/10,2*\x2/10) [circle,fill,inner sep=0.4pt]{};
		\node at (3*\x2/10,3*\x2/10) [circle,fill,inner sep=0.4pt]{};
		\node at (4*\x2/10,4*\x2/10) [circle,fill,inner sep=0.4pt]{};
		\node at (5*\x2/10,5*\x2/10) [circle,fill,inner sep=0.4pt]{};
		\node at (6*\x2/10,6*\x2/10) [circle,fill,inner sep=0.4pt]{};
		\node at (7*\x2/10,7*\x2/10) [circle,fill,inner sep=0.4pt]{};
		\node at (8*\x2/10,8*\x2/10) [circle,fill,inner sep=0.4pt]{};
		\node at (9*\x2/10,9*\x2/10) [circle,fill,inner sep=0.4pt]{};
		\node at (10*\x2/10,10*\x2/10) [circle,fill,inner sep=0.4pt]{};
		\draw [draw=black,pattern=dots,pattern color=black] (\x3,0) rectangle (\y3,\x2);
		\draw [draw=black,pattern=dots,pattern color=black] (0,\x3) rectangle (\x2,\y3);
		\draw [draw=black,pattern=north east lines,pattern color=black] (0,\y2)--(0,\x4)--(\x2,\y2)--(0,\y2);
		\draw [draw=black,pattern=north east lines,pattern color=black] (\y2,0)--(\x4,0)--(\y2,\x2)--(\y2,0);
		\draw [draw=black,fill=gray] (\x2,\x2)--(\y2,\x2)--(\x2,\y2)--(\x2,\x2);
		\draw (\x2,-.02) -- (\x2,.02) node[below=3pt] {$1$};
		\draw (\x3,-.02) -- (\x3,.02) node[below=10pt,right=-4pt] {$\rho_{\min}$};
		\draw (\x4,-.02) -- (\x4,.02) node[below=4pt] {$\frac{N}{2}$};
		\draw (-.02,\x2) -- (.02,\x2) node[left=2pt] {$1$};
		\draw (-.04,\x3) -- (.02,\x3) node[left=8pt] {$\rho_{\min}$};
		\draw (-.02,\x4) -- (.02,\x4) node[left=4pt] {$\frac{N}{2}$}; 
		\draw (\y2,-.02) -- (\y2,.02) node[above=4pt] {};
		\draw (\y3,-.02) -- (\y3,.02) node[above=4pt] {};
		\draw (-.02,\y2) -- (.02,\y2) node[right=4pt] {};
		\draw (-.02,\y3) -- (.02,\y3) node[right=4pt] {};
		\draw (0,\x4) -- (\x4,0);
		\draw[->,thick] (0,0) -- (1.1,0) node[above=4pt, right=-8pt] {$\tau_{1,2}$};
		\draw[->,thick] (0,0) -- (0,1.1) node[above=-3pt, right=1pt] {$\tau_{3,4}$};
		\end{tikzpicture}
		\caption{Theorem \ref{thm:pairwUpc1}}
	\end{subfigure}
	\caption{Sketch of four-node configurations, $t_1< t_2< t_3< t_4\in \intercl{0,1}$, $t_1=0$, $t_3=1/2$, $N$ large enough, $\tau_{1,2}:=N\abs{t_1-t_2}_\T$, $\tau_{3,4}:=N\abs{t_3-t_4}_\T$. dotted: Theorem can be applied, filled: well-separated, lined: 3 nearly-colliding nodes, empty areas: at most 2 nearly-coll.~nodes, but not covered by results.}\label{fig:4nodes}
\end{figure}
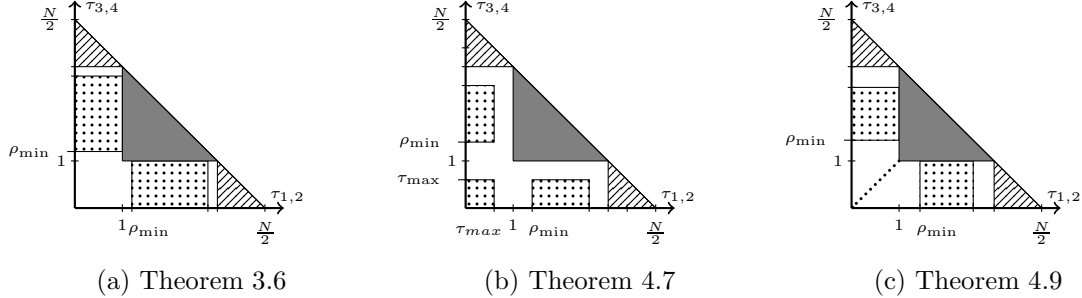

A reasonable result for well-separated nodes is as follows.
\begin{thm}[\cite{Mo15,AuBo17}]\label{la:condWell}
	Let $A$ be a Vandermonde matrix as in \eqref{eq:vandermonde} with $\tau>1$, then
	\begin{equation*}
	  N\left(1-\frac{1}{\tau}\right) \le \smin^2(A)\le N \le \smax^2(A)\le N\left(1+\frac{1}{\tau}\right).
	\end{equation*}
	In particular, we have
	\begin{equation*}
         \cond(A)^2 \le 1+ \frac{2}{\tau-1}
        \end{equation*}
	and this implies $\norm{K} \le N + N/\tau$ and $\norm{K^{-1}}=\norm{A^\dagger}^2\le (N - N/\tau)^{-1}$, where $A^\dagger := A^*(AA^*)^{-1}$ denotes the Moore-Penrose pseudo inverse of $A$.
\end{thm}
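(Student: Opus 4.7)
The plan is to analyse the Gram matrix $K = AA^*$, whose diagonal entries are $\dirich{0} = N$ and whose off-diagonals are $K_{ij} = \dirich{t_i-t_j}$. The middle inequality $\smin^2(A) \le N \le \smax^2(A)$ follows immediately from $e_i^* K e_i = N$ together with the Rayleigh--Ritz characterisation of $\lmin(K)$ and $\lmax(K)$.

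For the outer inequalities it suffices to establish $\norm{K-NI}\le N/\tau$, since then $\lmax(K) \le N+N/\tau$ and $\lmin(K) \ge N-N/\tau$. A first natural attempt is Gershgorin's circle theorem combined with the elementary kernel estimate $\abs{\dirich{t}} \le (2\abs{t}_\T)^{-1}$, which comes from $\abs{\sin(N\pi t)}\le 1$ and $\abs{\sin(\pi t)} \ge 2\abs{t}_\T$ on $[-1/2,1/2]$. Ordering the neighbours of a fixed $t_i$ by their torus distance, the separation hypothesis places the $k$-th closest one at distance at least $\ceil{k/2}\tau/N$, and the resulting Gershgorin row sum is of order $(N/\tau)\log(N/\tau)$. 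This misses the claimed constant by a logarithmic factor, and closing this gap is the main obstacle.

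The sharp constant is obtained from Montgomery's large sieve inequality: for any $\delta$-separated nodes $t_1,\dots,t_M\in\T$ and any coefficients $a_{-n},\dots,a_n\in\C$,
\[ \inter{N-\delta^{-1}}\sum_{k=-n}^n \abs{a_k}^2 \;\le\; \sum_{j=1}^M \left|\sum_{k=-n}^n a_k\,\eip{kt_j}\right|^2 \;\le\; \inter{N+\delta^{-1}}\sum_{k=-n}^n \abs{a_k}^2. \]
Choosing $\delta = \tau/N$ and recognising the middle expression as $\norm{Aa}^2$, Parseval's identity yields $\smin^2(A) \ge N(1-1/\tau)$ and $\smax^2(A) \le N(1+1/\tau)$. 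Both directions admit a short proof via Beurling--Selberg majorants and minorants of $\chi_{[-n,n]}$ combined with Poisson summation, as carried out in \cite{Mo15,AuBo17}.

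The condition number bound then reduces to elementary algebra,
\[ \cond(A)^2 = \frac{\smax^2(A)}{\smin^2(A)} \le \frac{1+1/\tau}{1-1/\tau} = 1+\frac{2}{\tau-1}, \]
and the identities $\norm{K}=\smax^2(A)$ and $\norm{K^{-1}}=\smin^{-2}(A)=\norm{A^\dagger}^2$ are standard properties of the Gram matrix of a matrix of full row rank.
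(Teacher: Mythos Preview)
The paper does not supply a proof of this theorem; it is quoted from \cite{Mo15,AuBo17}. Your outline correctly identifies the Beurling--Selberg extremal function method as the route taken in those references, and the middle inequality via Rayleigh--Ritz, the Gershgorin aside, and the final algebra are all correct.

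There is, however, a genuine slip in your displayed two-sided large sieve. As written, with $a\in\C^N$ and the inner sum running over frequencies $k$, the middle expression is $\norm{Aa}^2$; but since $M<N$ the matrix $A$ has non-trivial kernel, so the lower bound $(N-\delta^{-1})\norm{a}^2\le\norm{Aa}^2$ is simply false (take any non-zero $a$ in that kernel). The classical large sieve only gives the upper bound in this direction. What the Beurling--Selberg minorant actually yields is the dual inequality
\[
(N-\delta^{-1})\sum_{j=1}^M|x_j|^2 \;\le\; \sum_{k=-n}^n\Bigl|\sum_{j=1}^M x_j\,\eip{kt_j}\Bigr|^2 \;\le\; (N+\delta^{-1})\sum_{j=1}^M|x_j|^2
\]
for $x\in\C^M$, i.e.\ two-sided bounds on $\norm{A^*x}^2=x^*Kx$, which is exactly $\lmin(K)\ge N-\delta^{-1}$ and $\lmax(K)\le N+\delta^{-1}$. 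The fix is to swap the roles of $j$ and $k$ in your display; the Poisson-summation argument with the minorant then works because it is the separation of the nodes $t_j$ (not of the integers $k$) that kills the off-diagonal terms.
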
 

We note in passing, that the above lower bound on the smallest singular value is an improvement of \cite{Mo15} by \cite{AuBo17} and that \cite{Mo15} and \cite{Di19} allow to replace $\frac{1}{\tau}$ in the upper and the lower bound by $\frac{1}{\tau}-\frac{1}{N}$, respectively.
Moreover, we have the following lower bound on the condition number.
This already shows that the upper bound for well-separated nodes is quite sharp and provides the benchmark for nearly-colliding nodes.

\begin{thm}[Lower bound]\label{thm:condlowerbound}
 Let $A$ be a Vandermonde matrix as in \eqref{eq:vandermonde}, then
 \begin{equation*}
  \smin^2(A) \le N-\abs{D_n(\tau/N)} \le N \le N+\abs{D_n(\tau/N)}\le\smax^2(A).
 \end{equation*}
 In particular, we have
  \begin{equation*}
    \cond(A)^2 \ge 1+\frac{2}{\pi\tau-1}
  \end{equation*}
 for $\tau\in\N+\frac{1}{2}$, uniformly in $N$ and almost matching the above upper bound.
 
 For nearly-colliding nodes, we have
  \begin{equation*}
    \cond(A)^2 \ge \frac{12}{\pi^2\tau^2}-1 \ge \frac{1}{\tau^2}
  \end{equation*}
 for $\tau\le \sqrt{12/\pi^2-1}\approx 0.46$ and $\cond(A) \ge \sqrt{6}/\pi\tau\approx 0.77/\tau$ for all $\tau\le 1$.
\end{thm}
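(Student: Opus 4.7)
The plan is to reduce the problem to the spectrum of a single $2\times 2$ principal submatrix of $K = AA^*$, namely the one indexed by a closest pair of nodes, and then to translate this into condition-number estimates via elementary sine inequalities. Concretely, I would pick indices $i, j$ realizing $|t_i - t_j|_\T = \tau/N$; since $D_n$ is real and even, the corresponding $2\times 2$ principal submatrix of $K$ has diagonal entries $D_n(0) = N$ and both off-diagonal entries equal to $D_n(\tau/N)$, so its two eigenvalues are exactly $N \pm |D_n(\tau/N)|$. The Cauchy interlacing theorem applied to the Hermitian positive semidefinite $K$ then yields $\sigma_{\min}^2(A) \le N - |D_n(\tau/N)|$ and $\sigma_{\max}^2(A) \ge N + |D_n(\tau/N)|$, which is the first displayed chain of inequalities.

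The remaining assertions reduce to lower bounds on $|D_n(\tau/N)| = |\sin(\pi\tau)|/\sin(\pi\tau/N)$ that are uniform in $N$, exploiting the elementary inequality $\sin(x) \le x$ for $x\ge 0$ in the denominator. For the first corollary with $\tau \in \N + 1/2$, one has $|\sin(\pi\tau)| = 1$, so $|D_n(\tau/N)| \ge N/(\pi\tau)$ and a direct substitution gives
$$\cond(A)^2 \ge \frac{N + N/(\pi\tau)}{N - N/(\pi\tau)} = 1 + \frac{2}{\pi\tau - 1}.$$
For the nearly-colliding case I would use the alternating Taylor estimate $|\sin(\pi\tau)| \ge \pi\tau - (\pi\tau)^3/6 = \pi\tau(1 - (\pi\tau)^2/6)$, combine it with the same denominator bound to get $|D_n(\tau/N)| \ge N(1 - (\pi\tau)^2/6)$, and substitute to obtain $\cond(A)^2 \ge 12/(\pi^2\tau^2) - 1$.

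The two final consequences are then pure algebra. The inequality $12/(\pi^2\tau^2) - 1 \ge 1/\tau^2$ rearranges to $\tau^2 \le 12/\pi^2 - 1$, which is precisely the stated range. For the bound $\cond(A) \ge \sqrt{6}/(\pi\tau)$ valid on all of $\tau \le 1$, I would split at the threshold $\tau = \sqrt{6}/\pi$: below it, $\pi^2\tau^2 \le 6$ implies $12/(\pi^2\tau^2) - 1 \ge 6/(\pi^2\tau^2)$ and the claim follows from the previous step; above it, the target reduces to $\cond(A) \ge 1$, which is trivial. The main subtlety I anticipate is checking that the Taylor bound on $|\sin(\pi\tau)|$ is used only in the regime where it is positive, namely $\tau < \sqrt{6}/\pi$, which is exactly where the stated nearly-colliding estimates have nontrivial content; outside this window, the universal inequality $\cond(A) \ge 1$ absorbs the claim.
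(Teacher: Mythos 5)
Your proposal is correct and follows essentially the same route as the paper: pick the $2\times 2$ principal block of $K$ indexed by a closest pair of nodes, apply Cauchy interlacing (Lemma~\ref{interlacing}) to get $\sigma_{\min}^2(A)\le N-|D_n(\tau/N)|$ and $\sigma_{\max}^2(A)\ge N+|D_n(\tau/N)|$, and then lower-bound $|D_n(\tau/N)|$ via the elementary sine inequalities. The only cosmetic difference is that the paper delegates the estimate $D_n(\tau/N)\ge N(1-\pi^2\tau^2/6)$ to the appendix Lemma~\ref{la:dirichbounds}, whereas you re-derive it inline from $\sin x\ge x-x^3/6$ and $\sin x\le x$; your handling of the regime where the Taylor lower bound becomes nonpositive, via the split at $\tau=\sqrt{6}/\pi$, is also the right way to obtain the final bound on all of $\tau\le 1$.
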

\begin{proof}
 Without loss of generality, let $t_2-t_1=\tau/N$ and consider the upper left $2\times 2$-block in
 \begin{equation*}
  K= \pmat{C & * \\ * & *}, \quad C:=\pmat{\dirich{0} & \dirich{\tau/N}\\ \dirich{\tau/N}& \dirich{0}}.
 \end{equation*}
 We apply Lemma $\ref{interlacing}$, get 
 \begin{equation*}
  \cond(A)^2=\frac{\lmax(K)}{\lmin(K)}\ge \frac{\lmax(C)}{\lmin(C)} = \frac{\dirich{0}+\abs{\dirich{\tau/N}}}{\dirich{0}-\abs{\dirich{\tau/N}}} = 1+\frac{2\abs{\dirich{\tau/N}}}{N-\abs{\dirich{\tau/N}}},
 \end{equation*}  
 and Lemma \ref{la:dirichbounds} yields the assertion.
\end{proof}

\section{Nodes with one nearly-colliding pair}\label{ch:2coll}
\begin{definition}\label{def:2coll}
Let $M\ge 2$ and $0=t_1<\cdots<t_M \in \intercl{0,1}$ such that
\begin{align*}
\abs{t_1-t_2}_\T &= \frac{\tau}{N}, &0&<\tau\le 1,\\
\abs{t_j-t_\ell}_\T &\ge \frac{\rho}{N}, \;j\ne \ell,\;\ell\ge 3, &1&<\rho<\infty,
\end{align*}
then $\{t_1,\hdots,t_M\}$ is called a set of \emph{nodes with one nearly-colliding pair}, see Figure \ref{fig:2coll} for an illustration.
Due to periodicity, the choice $t_1=0$ and $\abs{t_1-t_2}_\T = \frac{\tau}{N}$ is without loss of generality.
\end{definition}
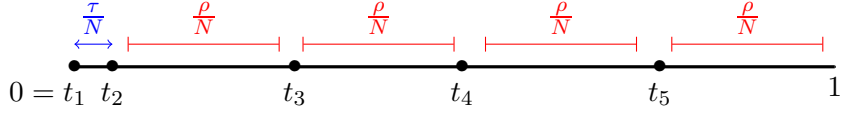
\begin{figure}[h!]
	\begin{center}
		\begin{tikzpicture}[xscale=10]
		\tikzmath{\r = 0.2; \t =0.05; \a=0.3;
			\x3=\t+\r*1.2; \d3=0.5*(\x3-\t-\r);
			\x4=\x3+\r*1.1; \d4=0.5*(\x4-\x3-\r);
			\x5=\x4+\r*1.3; \d5=0.5*(\x5-\x4-\r);\d6=0.5*(1-\x5-\r);}
		\draw[-][draw=black, very thick] (0,0) -- (1,0);
		\draw [thick] 	(1,0)		node[below]{1} -- (1,0);
		\node at (0,0) [label={[below,shift={(-0.35,-0.95*\a)}] {$0=t_1$}}]{$\bullet$};
		\node at (\t,0) [label={[below,shift={(0,-\a)}] {$t_2$}}]{$\bullet$};
		\draw [<->,blue](0,\a) 	-- (\t,\a) node[midway,above]{$\frac{\tau}{N}$};
		\node at (\x3,0) [label={[below,shift={(0,-\a)}] {$t_3$}}]{$\bullet$};
		\draw [|-|,red] (\t+\d3,\a) -- (\x3-\d3,\a) node[midway,above]{$\frac{\rho}{N}$};
		\node at (\x4,0) [label={[below,shift={(0,-\a)}] {$t_4$}}]{$\bullet$};
		\draw [|-|,red] (\x3+\d4,\a) -- (\x4-\d4,\a) node[midway,above]{$\frac{\rho}{N}$};
		\node at (\x5,0) [label={[below,shift={(0,-\a)}] {$t_5$}}]{$\bullet$};
		\draw [|-|,red] (\x4+\d5,\a) -- (\x5-\d5,\a) node[midway,above]{$\frac{\rho}{N}$};
		\draw [|-|,red] (\x5+\d6,\a) -- (1-\d6,\a) node[midway,above]{$\frac{\rho}{N}$};
		\end{tikzpicture}
	\end{center}
	\caption{Example of a node set with $M=5$ satisfying Def.~\ref{def:2coll}.\label{fig:2coll}}
\end{figure}

Now, we estimate an upper bound on the condition number of the Hermitian matrix $K$ by bounding $\norm{K}$ directly and applying Lemma \ref{la:schurDecomp} to $K^{-1}$ before bounding $\norm{K^{-1}}$. 
For that, we introduce some notation for abbreviation.
\begin{definition}\label{def:2coll2}
	We define $a_1 := \pmat{z_1^k}_{\abs{k}\le n}\in \C^{1\times N}$ and $A_2:= \pmat{z_j^k}_{\substack{j=2,\dots,M\\ \abs{k}\le n}}\in \C^{(M-1)\times N}$ so that with
	\begin{align}\label{eq:decomp11}
		a_1a_1^*=N, \quad K_2:= A_2A_2^* \quad\text{and}\quad b:=A_2 a_1^*=\pmat{D_n(\tau/N)\\D_n(t_3)\\ \vdots\\ D_n(t_M)},
	\end{align}
	we have the partitioning
	\begin{align}\label{eq:decomp12}
		A=\pmat{a_1\\ A_2} \quad \text{and} \quad K =\pmat{N & b^* \\ b & K_2},
	\end{align}
	 where $A_2$ is a Vandermonde matrix with nodes that are at least $\frac{\rho}{N}$ separated.
\end{definition}

\begin{lemma}\label{la:2collnormK}
	Under the conditions of Definition \ref{def:2coll} and for $\rho\ge 6$, we have
	\begin{equation*}
	\norm{K}\le 2.3 N.
	\end{equation*}
\end{lemma}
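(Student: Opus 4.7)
The plan is to exploit the block structure from Definition~\ref{def:2coll2}, writing
\begin{equation*}
K = \pmat{N & 0 \\ 0 & K_2} + \pmat{0 & b^* \\ b & 0},
\end{equation*}
so that by the triangle inequality $\norm{K} \le \max(N,\norm{K_2}) + \norm{b}$; a direct eigenvalue computation shows that the anti-diagonal Hermitian summand has spectral norm exactly $\norm{b}$ (its nonzero eigenvalues are $\pm\norm{b}$). Thus the task reduces to bounding $\norm{K_2}$ and $\norm{b}$ separately.

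For $\norm{K_2}$: the nodes $t_2,\dots,t_M$ of $A_2$ are pairwise at distance at least $\rho/N$ from one another (the pair $t_2,t_j$ for $j\ge 3$ by Definition~\ref{def:2coll}, and the pairs $t_j,t_\ell$ for $j,\ell\ge 3$ likewise), so Theorem~\ref{la:condWell} applies to $A_2$ with $\rho$ in place of $\tau$ and yields $\norm{K_2}\le N(1+1/\rho)\le 7N/6$ when $\rho\ge 6$. Since $\trace(K_2)=(M-1)N$ forces $\norm{K_2}\ge N$, the maximum above equals $\norm{K_2}$.

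For $\norm{b}$: by \eqref{eq:decomp11}, $\norm{b}^2 = \abs{\dirich{\tau/N}}^2 + \sum_{j=3}^M \abs{\dirich{t_j}}^2$. The first term is bounded crudely by $N^2$, which is essentially sharp as $\tau\to 0$. For the second, the nodes $t_3,\dots,t_M$ satisfy $\abs{t_j}_\T\ge \rho/N$ and are pairwise $\rho/N$-separated, so by a standard packing argument, after ordering by wrap-around distance from $t_1=0$ on each side of the torus, the $k$-th closest node lies at distance at least $k\rho/N$. Combined with the pointwise bound $\abs{\dirich{t}}\le 1/(2\abs{t}_\T)$ from Lemma~\ref{la:dirichbounds} this gives
\begin{equation*}
\sum_{j=3}^M \abs{\dirich{t_j}}^2 \;\le\; 2\sum_{k=1}^{\lfloor N/(2\rho)\rfloor}\frac{N^2}{4(k\rho)^2} \;\le\; \frac{\pi^2 N^2}{12\rho^2}.
\end{equation*}

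Putting the pieces together produces $\norm{K}\le N(1+1/\rho)+N\sqrt{1+\pi^2/(12\rho^2)}$, and substituting $\rho\ge 6$ bounds this by $(7/6+\sqrt{1+\pi^2/432})N \approx 2.18\,N < 2.3\,N$. The main obstacle is not conceptual but arithmetic: the contribution $\abs{\dirich{\tau/N}}\le N$ inside $\norm{b}$ saturates as $\tau\to 0$ and is the fundamental reason one cannot push the constant much below $2$, while the packing estimate for the well-separated part is routine and loses little for moderate $\rho$.
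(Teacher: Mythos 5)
Your proof is correct and gets below the target constant (your arithmetic gives roughly $2.18N$, comfortably under $2.3N$). The macro-structure matches the paper exactly: split $K$ into the block-diagonal part $\operatorname{diag}(N,K_2)$ and the anti-diagonal part with off-diagonal block $b$, apply the triangle inequality, and bound $\norm{K_2}$ by Theorem~\ref{la:condWell}. The only real difference is in how $\norm{b}$ is estimated. The paper uses the one-line submultiplicativity bound $\norm{b}=\norm{A_2a_1^*}\le\norm{A_2}\norm{a_1}=\sqrt{N\norm{K_2}}\le N\sqrt{(\rho+1)/\rho}$, whereas you compute $\norm{b}^2$ entrywise, isolating $\abs{D_n(\tau/N)}^2\le N^2$ and controlling $\sum_{j\ge3}\abs{D_n(t_j)}^2$ via the pointwise Dirichlet decay $\abs{D_n(t)}\le 1/(2\abs{t}_\T)$ together with a two-sided packing argument. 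Your route is more explicit and in fact a touch sharper for large $\rho$ (at $\rho=6$ it gives $\norm{b}\lesssim 1.012N$ versus the paper's $\approx 1.080N$), at the cost of a slightly longer argument; the paper's one-liner is tidier and already sufficient. Your observation that $\norm{K_2}\ge N$ so the $\max$ collapses, and your direct eigenvalue computation for the anti-diagonal block (nonzero eigenvalues $\pm\norm{b}$) in place of Lemma~\ref{blockGersch}, are both fine and equivalent.
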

\begin{proof}
The key idea is to see the set of nodes as a union of two well-separated subsets and use the existing bounds for these. In contrast to the next chapter, here, one of the sets only consist of a single node.
We start by noting that Theorem \ref{la:condWell} and \eqref{eq:decomp11} yield $\norm{b}^2\le \norm{a_1}^2 \norm{A_2}^2=N\norm{K_2}$.
Together with the decomposition \eqref{eq:decomp12}, the triangle inequality, Lemma \ref{blockGersch}, and Theorem \ref{la:condWell}, we obtain
\begin{equation*}
\norm{K} \le \norm{\pmat{N&0\\0&K_2}} +\norm{\pmat{0&b^*\\b&0}}\le\norm{K_2}+\norm{b}
\le N\left(\frac{\rho+1}{\rho}+\sqrt{\frac{\rho+1}{\rho}}\right).
\end{equation*}
\end{proof}

\begin{lemma}\label{la:btaylor}
Under the conditions of Definition \ref{def:2coll} and with $b$ as in $\eqref{eq:decomp11}$, we have
	\begin{equation*}
	b=K_2 e_1 +r,
	\end{equation*}
	where $e_1\in \R^{(M-1)}$ denotes the first unit vector and
	\begin{equation*}
	\norm{r}^2 \le \left(N-\dirich{\tau/N}\right)^2 +N^2\tau^2 \left(\frac{\pi^4}{12\rho^2}+\frac{1.21\pi}{\rho^3}+ \frac{\pi^4}{180\rho^4} \right).
	\end{equation*}
\end{lemma}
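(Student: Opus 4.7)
The plan is to identify the entries of $r = b - K_2 e_1$ and then bound the sum of squares via a Taylor expansion of the Dirichlet kernel. Since $t_1 = 0$, the $j$-th entry of $b = A_2 a_1^*$ equals $\sum_{|k|\le n} z_j^k z_1^{-k} = D_n(t_j)$, so $b = (D_n(\tau/N), D_n(t_3),\ldots, D_n(t_M))^T$. The first column of $K_2 = (D_n(t_i-t_\ell))_{i,\ell=2}^M$ is simply $(N, D_n(t_3-t_2),\ldots, D_n(t_M-t_2))^T$. Hence $r_1 = D_n(\tau/N) - N$, which contributes $(N - D_n(\tau/N))^2$ to $\norm{r}^2$ directly, and $r_{j-1} = D_n(t_j) - D_n(t_j-t_2)$ for $j \ge 3$.

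For each such $j$ I would Taylor-expand $D_n$ around $t_j$, writing
\[
D_n(t_j - t_2) = D_n(t_j) - t_2 D_n'(t_j) + \frac{t_2^2}{2} D_n''(\xi_j)
\]
for some $\xi_j$ between $t_j - t_2$ and $t_j$, so that $r_{j-1} = t_2 D_n'(t_j) - \frac{t_2^2}{2} D_n''(\xi_j)$. Squaring and using $(|a|+|b|)^2 = a^2 + 2|a||b| + b^2$ yields
\[
r_{j-1}^2 \le t_2^2 (D_n'(t_j))^2 + t_2^3 |D_n'(t_j)|\,|D_n''(\xi_j)| + \frac{t_2^4}{4} (D_n''(\xi_j))^2,
\]
which splits the sum $\sum_{j\ge 3} r_{j-1}^2$ into three pieces to be treated separately.

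Next, pointwise bounds on the Dirichlet kernel derivatives (which I expect Lemma \ref{la:dirichbounds} to supply, typically via $|D_n'(t)| \le \pi N/(2|t|_\T)$ and analogues for $|D_n''|$ derived from the identity $D_n''(t) = -\pi^2(N^2-1) D_n(t) - 2\pi \cot(\pi t) D_n'(t)$ together with $|\sin(\pi t)| \ge 2|t|_\T$) let me replace each factor by an expression in $1/|t_j|_\T$, using $|\xi_j|_\T \ge |t_j|_\T - t_2$. The nodes $t_3, \ldots, t_M$ are $\rho/N$-separated from $0$ by Definition \ref{def:2coll}, so enumerating them by distance to $0$ on each side of $\T$ yields $|t_j|_\T \ge k\rho/N$ for the $k$-th closest on that side, whence
\[
\sum_{j=3}^M \frac{1}{|t_j|_\T^{m}} \le \frac{2 N^m}{\rho^m}\,\zeta(m), \qquad m \ge 2.
\]
Using $\zeta(2) = \pi^2/6$ on the first piece produces exactly the leading constant $\pi^4/(12\rho^2)$; applying $\zeta(4) = \pi^4/90$ to the remainder piece yields $\pi^4/(180\rho^4)$; and $\zeta(3) \approx 1.202$ handles the cross term, matching the stated $1.21\pi/\rho^3$ coefficient. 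Combined with $r_1^2 = (N-D_n(\tau/N))^2$ this gives the claimed bound.

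The main obstacle will be the bookkeeping around $\xi_j$ and the pointwise estimate of $|D_n''|$: to keep the two subleading pieces at genuine orders $1/\rho^3$ and $1/\rho^4$ rather than leaking $1/\rho^2$-sized errors, one must use a sharp bound on $D_n''$ (not the trivial $N^2$-type estimate) and show that replacing $|\xi_j|_\T$ by $|t_j|_\T$ costs only an acceptable correction. The hypothesis $\tau \le 1 < \rho$ from Definition \ref{def:2coll} is essential here, since it guarantees that the Taylor remainder is evaluated at points still comfortably bounded away from $0$ on the torus.
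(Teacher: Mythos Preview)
Your identification of $r$ and the direct computation of $r_1^2=(N-D_n(\tau/N))^2$ are correct, and so is the outline: bound each remaining $|r_{j}|$ individually, square, and sum via $\zeta$-values. The gap is in the Taylor step. The paper uses only the \emph{first-order} mean value theorem, $|r_{j}|=|D_n'(\xi_j)|\,\tau/N$, together with the \emph{two-term} bound $|D_n'(t)|\le N^2\bigl(\tfrac{\pi}{2N|t|}+\tfrac{1}{2N^2|t|^2}\bigr)$ from Lemma~\ref{la:dirichbounds}. With $\xi_j\ge k\rho/N$ (where $k=j-1$ after the worst-case packing argument) this gives $|r_{j}|\le N\tau\bigl(\tfrac{\pi}{2k\rho}+\tfrac{1}{2k^2\rho^2}\bigr)$; \emph{squaring this binomial} is what produces the three summands in $1/k^2,\,1/k^3,\,1/k^4$, and then $\zeta(2),\zeta(3),\zeta(4)$ yield exactly the constants $\pi^4/12$, $1.21\pi$, $\pi^4/180$.

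Your second-order expansion cannot reproduce these constants. The bound on $D_n''$ in Lemma~\ref{la:dirichbounds} has \emph{leading} term $\pi^2N^2/(2|t|)$, not something of the form $C/|t|^2$; this reflects the genuine oscillation of $D_n$ at frequency $N$ and cannot be sharpened away. Consequently your piece $\tfrac{t_2^4}{4}(D_n''(\xi_j))^2$ contributes summands of size $\tau^4 N^2/(k^2\rho^2)$, leaking back into the leading $1/\rho^2$ order, and the cross piece likewise gives $\tau^3 N^2/(k^2\rho^2)$; neither produces terms in $1/k^3$ or $1/k^4$. The ``sharp bound on $D_n''$'' you anticipate needing to avoid this leak does not exist, so your matching of $\zeta(3)$ and $\zeta(4)$ to the stated subleading constants is a coincidence of appearance, not of derivation.
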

\begin{proof}
 The vector $b$ can be approximated by the first column of $K_2$ in the sense that
  \begin{equation*}
	b=\pmat{\dirich{\tau/N}\\ \dirich{t_3}\\ \vdots\\ \dirich{t_M}}=\pmat{\dirich{0}\\ \dirich{t_3-\tau/N}\\ \vdots\\ \dirich{t_M-\tau/N}}+ \pmat{r_1 \\ \vdots\\ r_{M-1}}.
  \end{equation*}
	We have $\abs{r_1} = N-\dirich{\tau/N}$ and for $j=2,\hdots,M-1$ the mean value theorem yields
	\begin{equation*}
		\abs{r_j}=\abs{\dirich{t_{j+1}}-\dirich{t_{j+1}-\tau/N}}=\abs{\dirichd{\xi_j}}\frac{\tau}{N}, \quad \xi_j\in (\abs{t_{j+1}-\frac{\tau}{N}}_\T,\abs{t_{j+1}}_\T).
	\end{equation*}
	Note that, in the worst case, half of the nodes can be as close as possible (under the assumed separation condition) to $t_2$ not only on its right but also on its left.
	Hence, for $j=2,\dots,\ceil{\frac{M}{2}}$, $\xi_j \ge \frac{(j-1)\rho}{N}$ and Lemma \ref{la:dirichbounds} lead to
	\begin{equation*}
	\abs{r_j}\le N\left(\frac{\pi}{2N\abs{\xi_j}}+\frac{1}{2N^2\abs{\xi_j}^2}\right)\tau
	\le N\left(\frac{\pi}{2(j-1)\rho}+\frac{1}{2(j-1)^2\rho^2}\right)\tau.
	\end{equation*}
	Thus, for all nodes, we get 
	\begin{equation*}
		\sum_{j=2}^{M-1}|r_j|^2 
		\le 2 \sum_{j=2}^{\lceil M/2\rceil}|r_j|^2
		\le N^2\tau^2 \left( 
		\frac{\pi^2}{2\rho^2} \underbrace{\sum_{j=1}^{\infty}\frac{1}{j^2}}_{=\frac{\pi^2}{6}}
		+ \frac{\pi}{\rho^3} \underbrace{\sum_{j=1}^{\infty} \frac{1}{j^3}}_{\le 1.21} 
		+ \frac{1}{2\rho^4}\underbrace{\sum_{j=1}^{\infty} \frac{1}{j^4}}_{=\frac{\pi^4}{90}}
		\right).
	\end{equation*}
\end{proof}

\begin{lemma}\label{la:2collnormKinv}
Under the conditions of Definition \ref{def:2coll} and for $\rho \ge 5$, we have
	\begin{equation*}
	\norm{K^{-1}}\le \frac{C(\rho)}{N\tau^2},
	\end{equation*}
	where
	\begin{equation*}
	C(\rho) = \left(\frac{2\rho-1}{\rho-1}+\sqrt{\frac{\rho}{\rho-1}} \right) \left[2- \frac{\rho}{\rho-1}  \left(1+\frac{\pi^4}{12\rho^2}+\frac{1.21\pi}{\rho^3}+\frac{\pi^4}{180\rho^4}  \right)\right]^{-1}.
	\end{equation*}
\end{lemma}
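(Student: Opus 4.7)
The plan is to apply the block Schur-complement representation of $K^{-1}$ to the partitioning \eqref{eq:decomp12} and then estimate each ingredient using Theorem \ref{la:condWell} and Lemma \ref{la:btaylor}. With $S := N - b^*K_2^{-1}b$ denoting the Schur complement of $K_2$ in $K$, Lemma \ref{la:schurDecomp} gives
\[
K^{-1} \;=\; \pmat{0 & 0 \\ 0 & K_2^{-1}} + \frac{1}{S}\pmat{1 \\ -K_2^{-1}b}\pmat{1 & -b^*K_2^{-1}},
\]
a sum of a block containing $K_2^{-1}$ and a positive semidefinite rank-one operator, so subadditivity of the spectral norm reduces the problem to three estimates,
\[
\norm{K^{-1}} \;\le\; \norm{K_2^{-1}} + \frac{1+\norm{K_2^{-1}b}^2}{S}.
\]
Two of them are immediate. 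Since $A_2$ has separation distance at least $\rho/N>1/N$, Theorem \ref{la:condWell} yields $\norm{K_2^{-1}}\le \rho/[N(\rho-1)]$, and the spectral-norm inequality $\norm{K_2^{-1}b}^2 \le \norm{K_2^{-1}}\cdot b^*K_2^{-1}b = \norm{K_2^{-1}}(N-S)$ gives $\norm{K_2^{-1}b}^2 \le N\norm{K_2^{-1}}\le \rho/(\rho-1)$.

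The main obstacle is the lower bound on $S$. Substituting the Taylor-type decomposition $b = K_2 e_1+r$ from Lemma \ref{la:btaylor} together with $e_1^*K_2 e_1 = \dirich{0}=N$ and $e_1^*r = \dirich{\tau/N}-N$, a short computation produces the clean identity
\[
S \;=\; 2\bigl(N-\dirich{\tau/N}\bigr) - r^*K_2^{-1}r.
\]
I would then bound the remainder by $r^*K_2^{-1}r\le\norm{K_2^{-1}}\norm{r}^2$, insert Lemma \ref{la:btaylor}'s estimate of $\norm{r}^2$, linearize the $(N-\dirich{\tau/N})^2$ contribution via the elementary inequality $(N-\dirich{\tau/N})^2 \le N\,(N-\dirich{\tau/N})$ (valid because $\dirich{\tau/N}\ge 0$ for $\tau\le 1$), and apply the Dirichlet-kernel lower bound $N-\dirich{\tau/N}\ge N\tau^2$ for $\tau\le 1$ (from Lemma \ref{la:dirichbounds}) together with $N\norm{K_2^{-1}}\le \rho/(\rho-1)$. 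Collecting terms with $g(\rho):=\pi^4/(12\rho^2)+1.21\pi/\rho^3+\pi^4/(180\rho^4)$, this yields
\[
S \;\ge\; N\tau^2\left[\,2-\frac{\rho}{\rho-1}\bigl(1+g(\rho)\bigr)\,\right] \;=:\; N\tau^2\,D(\rho),
\]
strictly positive for $\rho\ge 5$.

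The final step is to match the exact shape of $C(\rho)$. Straightforward assembly of the three estimates together with $\tau\le 1$ gives $N\tau^2\norm{K^{-1}} \le \rho/(\rho-1) + [(2\rho-1)/(\rho-1)]/D(\rho)$. The stated numerator instead carries $\sqrt{\rho/(\rho-1)}$ in place of $\rho/(\rho-1)$, so the cleanup consists in verifying the auxiliary inequality $D(\rho)\le \sqrt{(\rho-1)/\rho}$ for $\rho\ge 5$. This rearranges to $\rho/(\rho-1) \le \sqrt{\rho/(\rho-1)}/D(\rho)$, allowing one to absorb the stand-alone $\rho/(\rho-1)$ term into the rank-one part and combine both contributions over the common denominator $D(\rho)$, which produces the claimed bound $C(\rho)/(N\tau^2)$.
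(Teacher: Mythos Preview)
Your argument is correct and follows the same overall strategy as the paper: apply the Schur complement with respect to $K_2$, derive the identity $S=2(N-D_n(\tau/N))-r^*K_2^{-1}r$ from Lemma~\ref{la:btaylor}, and bound $S$ from below via Lemma~\ref{la:dirichbounds} and Theorem~\ref{la:condWell}. The differences are in the packaging. The paper factors $K^{-1}$ as a product of two block-triangular matrices and a block-diagonal one and then invokes Lemma~\ref{blockGersch} to bound the triangular factor by $1+\norm{K_2^{-1}b}+\norm{K_2^{-1}b}^2$, with $\norm{K_2^{-1}b}\le\norm{A_2^{\dagger}}\norm{a_1}\le\sqrt{\rho/(\rho-1)}$; it then shows $\norm{K_2^{-1}}$ is dominated by $S^{-1}$ via the chain $N\norm{K_2^{-1}}\le\rho/(\rho-1)\le(\rho-1)/(\rho-2)\le D(\rho)^{-1}$. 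You instead write $K^{-1}$ as block plus rank-one, bound $\norm{K_2^{-1}b}^2\le\norm{K_2^{-1}}\,b^*K_2^{-1}b\le N\norm{K_2^{-1}}$ directly, and absorb the separate $\norm{K_2^{-1}}$ term into the rank-one part via the auxiliary inequality $D(\rho)\le\sqrt{(\rho-1)/\rho}$ (which follows from $D(\rho)<(\rho-2)/(\rho-1)$ and $\rho(\rho-2)^2\le(\rho-1)^3$ for $\rho\ge 2$). Your route is marginally more elementary in that it avoids Lemma~\ref{blockGersch}, and your intermediate bound $\norm{K_2^{-1}}+(1+\norm{K_2^{-1}b}^2)/S$ is actually a touch sharper for small $\tau$ before you deliberately relax it to match the stated $C(\rho)$.
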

\begin{proof}
	We consider $K$ decomposed as in $\eqref{eq:decomp12}$ and apply Lemma \ref{la:schurDecomp} with respect to $K_2$ to obtain
	\begin{equation*}
	K^{-1} = \pmat{I&0\\-K_2^{-1}b&I} \pmat{(N-b^*K_2^{-1}b)^{-1}& 0\\0&K_2^{-1}} \pmat{I & -b^*K_2^{-1} \\ 0 & I}
	\end{equation*}
	and thus,
	\begin{equation*}
		\norm{K^{-1}} \le \norm{\pmat{I&0\\-K_2^{-1}b&I}}^2 \max\left\{\norm{K_2^{-1}},\norm{\left(N-b^*K_2^{-1}b\right)^{-1}}\right\}.
	\end{equation*}

	First of all, we establish an upper bound for the norm of the triangular matrix.
	Equation \eqref{eq:decomp11} and Theorem \ref{la:condWell} imply
	\begin{equation*}
		 \norm{K_2^{-1}b}=\norm{(A_2 A_2^*)^{-1} A_2 a_1^*}\le \norm{A_2^\dagger}\norm{a_1}\le\sqrt{\frac{\rho}{\rho-1}}.
	\end{equation*}
	Together with Lemma \ref{blockGersch}, we obtain
	\begin{align}\label{eq:2collTriang}
		\norm{\pmat{I&0\\-K_2^{-1}b&I}}^2
		\le 1+\norm{K_2^{-1}b}+\norm{K_2^{-1}b}^2
		\le \frac{2\rho-1}{\rho-1}+\sqrt{\frac{\rho}{\rho-1}}.
	\end{align}
	The next step is to bound $(N-b^*K_2^{-1}b)^{-1}$. Lemma \ref{la:btaylor} yields
	\begin{equation*}
		b^*K_2^{-1}b = (K_2 e_1+ r)^* K_2^{-1} (K_2 e_1+ r)
		= 2 D_n(\tau/N)-D_n(0)+r^*K_2^{-1}r.
	\end{equation*}
	Applying the second part of Lemma \ref{la:btaylor}, Lemma \ref{la:dirichbounds}, and Theorem \ref{la:condWell} yields
	\begin{align*}
		N-b^*K_2^{-1}b \ge& 2\left(N-D_n(\tau/N)\right) - \norm{r}^2\norm{K_2^{-1}}\\
		\ge& \left(N-D_n(\tau/N)\right) \left( 2 - \left(N-D_n(\tau/N)\right)\norm{K_2^{-1}}\right)- \norm{K_2^{-1}} \sum_{j=2}^{M-1}\abs{r_j}^2 \\
		\ge& N\tau^2 \left(2-N\norm{K_2^{-1}}\right) -\|K_2^{-1}\| N^2\tau^2 \left(\frac{\pi^4}{12\rho^2}+\frac{1.21\pi}{\rho^3}+\frac{\pi^4}{180\rho^4}\right) \\
		\ge&  N\tau^2 \left[2- \frac{\rho}{\rho-1}  \left(1+\frac{\pi^4}{12\rho^2}+\frac{1.21\pi}{\rho^3}+\frac{\pi^4}{180\rho^4}  \right)\right].
	\end{align*}
	For $\rho\ge 5$, the most inner bracketed term takes values in $(1,1.4)$ such that the square bracketed term is positive. Forming the reciprocal gives the result, since
	Theorem \ref{la:condWell} also implies
	\begin{align}\label{eq:2collSchurMax}
		 N\norm{K_2^{-1}}\le \frac{\rho}{\rho-1}\le \frac{\rho-1}{\rho-2}\le \left[2- \frac{\rho}{\rho-1}  \left(1+\hdots\right)\right]^{-1}.
	\end{align}
\end{proof}

\begin{thm}[Upper bound]\label{thm:2collup}
Under the conditions of Definition \ref{def:2coll} with $\rho \ge \rho_{\min}=6$, we have
	\begin{equation*}
	\cond(A)\le \frac{4}{\tau}.
	\end{equation*}
\end{thm}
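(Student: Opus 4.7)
The plan is to combine the identity $\cond(A)^2=\|K\|\cdot\|K^{-1}\|$ with the two bounds just assembled. Lemma~\ref{la:2collnormK} gives $\|K\|\le 2.3\,N$ and Lemma~\ref{la:2collnormKinv} gives $\|K^{-1}\|\le C(\rho)/(N\tau^2)$; the threshold $\rho_{\min}=6$ is chosen precisely so both hypotheses hold at once. Multiplying, the $N$'s cancel and one is left with
$$\cond(A)^2 \le \frac{2.3\,C(\rho)}{\tau^2}.$$
The remaining task is a purely $\rho$-dependent numerical bound: verify that $2.3\,C(\rho)\le 16$ for all admissible $\rho\ge 6$.

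The next step is to reduce this to evaluating $C$ at the endpoint. I expect $C(\rho)$ to be monotonically decreasing in $\rho$ on $[6,\infty)$: its numerator $\frac{2\rho-1}{\rho-1}+\sqrt{\rho/(\rho-1)}$ is manifestly decreasing, while the bracketed denominator
$2-\frac{\rho}{\rho-1}\bigl(1+\frac{\pi^4}{12\rho^2}+\frac{1.21\pi}{\rho^3}+\frac{\pi^4}{180\rho^4}\bigr)$
is increasing in $\rho$ because both the prefactor $\rho/(\rho-1)$ and each of the correction terms $\pi^4/(12\rho^2),\ 1.21\pi/\rho^3,\ \pi^4/(180\rho^4)$ shrink as $\rho$ grows. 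Consequently $C(\rho)\le C(6)$, and it suffices to check one number.

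At $\rho=6$ the numerator is $\tfrac{11}{5}+\sqrt{6/5}$ and the denominator is $2-\tfrac{6}{5}\bigl(1+\pi^4/432+1.21\pi/216+\pi^4/233280\bigr)$; a short arithmetic check shows $C(6)$ sits comfortably below $7$, so $2.3\,C(6)<16$ and taking the square root yields $\cond(A)\le 4/\tau$.

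The main obstacle of the theorem has really already been absorbed in the preparatory lemmas, and in particular in the Schur complement of Lemma~\ref{la:2collnormKinv} paired with the Taylor-type expansion of $b$ from Lemma~\ref{la:btaylor}, which is where the crucial $N\tau^2$ scale emerges. The present statement is a bookkeeping step plus one numerical verification at the worst-case boundary $\rho=6$; the only subtlety is the monotonicity argument that justifies restricting attention to that endpoint.
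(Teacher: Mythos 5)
Your proof is correct and follows exactly the paper's route: the paper, too, simply multiplies the bound $\|K\|\le 2.3N$ from Lemma~\ref{la:2collnormK} by $\|K^{-1}\|\le C(\rho)/(N\tau^2)$ from Lemma~\ref{la:2collnormKinv}, uses the monotonicity of $C(\rho)$ (already noted in Remark~\ref{rm:2coll}), and evaluates $C(6)\le 6.5$ to conclude $2.3\cdot 6.5<16$. One small slip in your write-up: the inference ``$C(6)<7$, so $2.3\,C(6)<16$'' does not follow as stated, since $2.3\cdot 7=16.1>16$; you need the sharper bound $C(6)\le 6.5$ (or anything below $16/2.3\approx 6.96$), which your own arithmetic in fact produces ($C(6)\approx 6.49$), so the conclusion stands.
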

\begin{proof}
	The bound follows from Lemmata \ref{la:2collnormK} and \ref{la:2collnormKinv} with $C(\rho)\le C(6) \le 6.5$.
\end{proof}

\begin{wrapfigure}{r}{0.35\textwidth}
 \centering
    \includegraphics[width=0.3\textwidth]{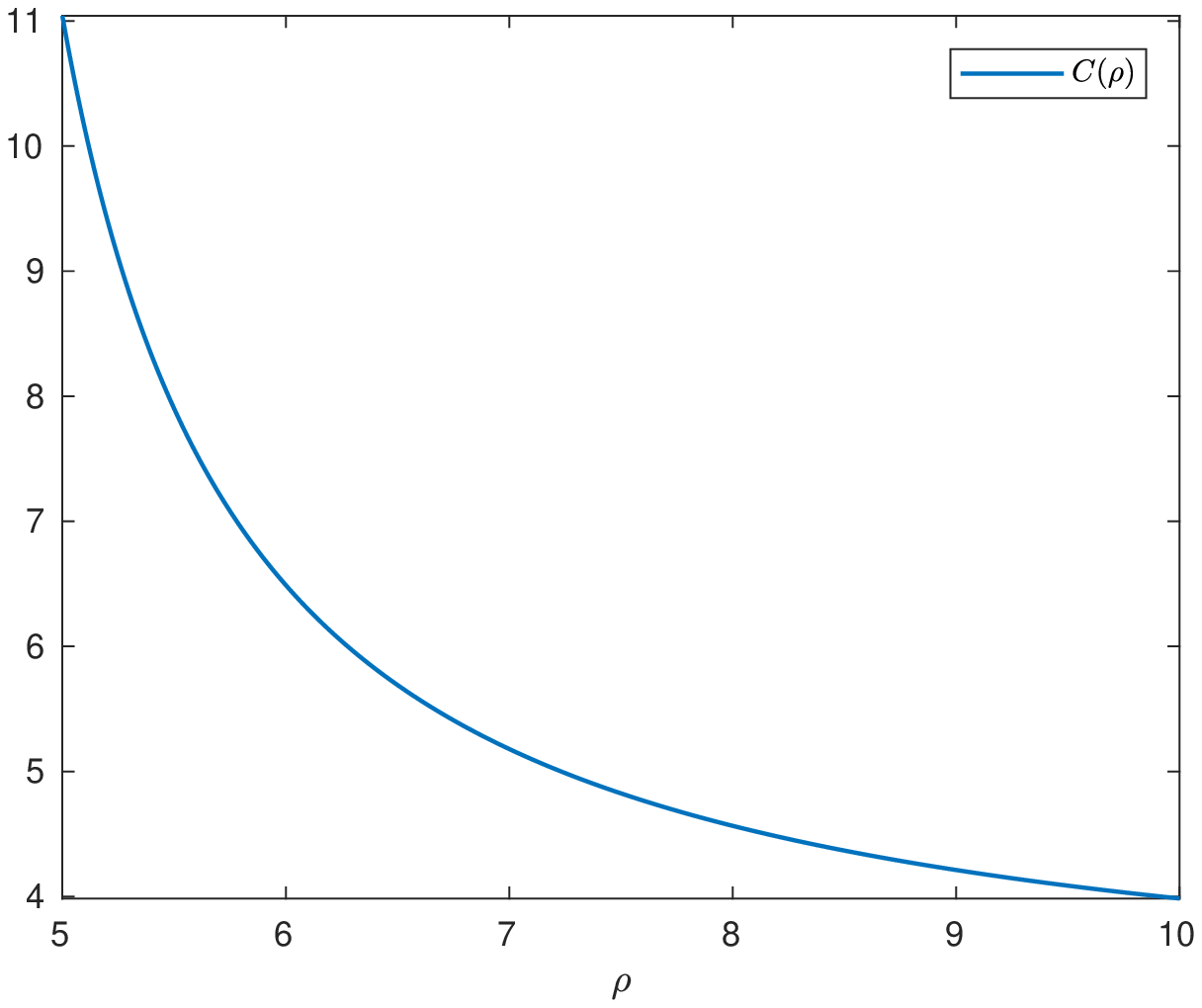}
    \vspace{-0.35cm}
  \caption{$C(\rho)$ in Lem.~\ref{la:2collnormKinv}.}
  \label{fig:2coll_C}
  \vspace{-1cm}
\end{wrapfigure}

 Lower and upper bounds in Theorems \ref{thm:condlowerbound} and \ref{thm:2collup} yield
 \begin{equation*}
  \frac{1}{\tau}\le\cond(A)\le \frac{4}{\tau}
 \end{equation*}
 for $\tau\le 0.46$ and $6\le\rho$.
 The condition on $\rho$ implies that for specific configurations of $M$ nodes, our result becomes effective as early as $N\approx 6M$ - this is in contrast to the results \cite{BaDeGoYo18,LiLi17}, where $N$ has to be much larger.

 \begin{remark}[Constants]\label{rm:2coll}
 Some comments regarding what is lost during our proof:
 \begin{enumerate}
 \item The constant in Lemma \ref{la:2collnormK} is a numerical value for all $\rho\ge 6$, indeed the proof is valid for all values $\rho>1$.
 The case $M=2$ shows that Lemmata \ref{la:2collnormK} and \ref{la:btaylor} are reasonable sharp since in this case $\norm{K}=N+D_n(\tau/N)\ge N(2-\pi^2\tau^2/6)$ and $\norm{r}=N-D_n(\tau/N)\ge N(2-\tau^2)$, see Lemma \ref{la:dirichbounds} for the two inequalities.
 \item In Lemma \ref{la:2collnormKinv}, the constant $C(\rho)$ is monotone decreasing in $\rho$, see also Figure \ref{fig:2coll_C}.
 It is bounded below by 3 which is due to the relatively crude norm estimate on the block triangular factors in the Schur complement decomposition. 
 Note that the left hand side in \eqref{eq:2collTriang} is bounded from below by $1+\norm{K_2^{-1}b}^2$. 
 An additional minor improvement on $C(\rho)$ and on the range of admissible values for $\rho$ can be achieved when applying Lemma \ref{la:dirichbounds} to two factors simultaneously.
 \end{enumerate}
\end{remark}

\begin{remark}[Generalizations and limitations]
 In principle, the suggested Schur complement technique can be generalized to more than two nodes colliding and also to the multivariate case:
 \begin{enumerate}
  \item Let $M\ge 3$ and $0=t_1<\cdots<t_M \in \intercl{0,1}$ be such that $\{t_1,t_2,t_3\}$ nearly-collide and decompose
 \begin{align*}
   K =\pmat{K_1 & B^* \\ B & K_2},\quad
   K_1=\pmat{N & \dirich{t_1-t_2}\\ \dirich{t_1-t_2} & N},\quad
   K_2=\pmat{\dirich{t_i-t_j}}_{i,j=3}^M.
 \end{align*}
 While it is clear that the Schur complement $K_1-B^* K_2^{-1} B$ is strictly positive definite, establishing a lower bound on its smallest singular value similar to the proofs of Lemmata \ref{la:btaylor} and \ref{la:2collnormKinv} seems considerably harder. Already the linear approximation in Lemma \ref{la:btaylor} then needs to be replaced by a higher order approximation for the matrix $B$.

 \item Consider the bivariate case and the Vandermonde matrix
   \begin{equation*}
    A= \pmat{z_j^\gamma}_{\substack{j=1,\hdots,M \\ \norm{\gamma}_\infty \le n}} \in \C^{M\times N^2},
   \end{equation*}
   where $z_j=(x_j, y_j)=(\eim{u_j},\eim{v_j}) \in \T^2$, $\gamma=(\alpha,\beta)\in \Z^2$ is a multi-index, and $z_j^\gamma := x_j^\alpha \cdot y_j^\beta$.
   The distance of the nodes $t_j=(u_j,v_j)\in [0,1)^2$ is measured by $\abs{t_j-t_\ell}_{\T}:= \min_{r\in \Z^2}\norm{t_j-t_\ell+r}_\infty$ and we consider the situation as in Definitions \ref{def:2coll} and \ref{def:2coll2} with $K=AA^*$.
   Lemma \ref{la:btaylor} can be proven using the bivariate mean value theorem to get $|r_j|\le N\tau \pi / \abs{\xi_j}_{\T}$, $j=2,3,\hdots,M$, and the packing argument \cite[Lem.~4.5]{KuPo07} to get
   \begin{equation*}
	\norm{r}^2 
	\le (N^2- D_n(u_2)D_n(v_2))^2 + \frac{12\pi^2 N^4\tau^2}{(\rho-1)^2} \left(1+\log\ceil{\sqrt{M/6}}\right).
   \end{equation*}
   We need additional assumptions for Lemma \ref{la:2collnormKinv} to work since results for general well separated nodes, cf.~\cite{KuMoPeOh17}, seem to be too weak.
   If the nodes $t_2,\hdots,t_M$ are a subset of equispaced nodes in $\T^2$, then \cite[Cor.~4.11]{KuPo07} yields $\norm{K_2^{-1}}\le (N-N/\rho)^{-2}$.
   Together with $M\ge 4$ and $\rho\ge4+2\log M$, this yields $\norm{K^{-1}}\le 20 / N^2\tau^2$.
 \end{enumerate}
\end{remark}

\section{Pairs of nearly-colliding nodes}\label{ch:pairw}
We now study the situation in which the Vandermonde matrix comes from pairs of nearly-colliding nodes.
\begin{definition}\label{def:pairw}
Let $n\in \N$, $N = 2n+1$, $c \ge 1$ and let $t_1< \cdots < t_M \subset \intercl{0,1}$ for $M\ge 4$ even such that
\begin{align*}
	\frac{\tau}{N} &\le \abs{t_j-t_{j+\frac{M}{2}}}_\T \le \frac{c\tau}{N},\quad  j=1,\dots,\frac{M}{2},           &0&<c\tau\le 1,\\
	\frac{\rho}{N} &\le\abs{t_j-t_\ell}_\T, \quad j < \ell, \ell \ne j+\frac{M}{2},  &1&<\rho < \infty,
\end{align*}
then $\{t_1,\hdots,t_M\}$ is called a set of \emph{nodes with pairs of nearly-colliding nodes}, see Figure \ref{fig:pairw} for an illustration.
The constant $c$ measures the uniformity of the colliding nodes.
For subsequent use, we additionally introduce the following wrap around distance of indices $\abs{j-\ell}':=\min_{r\in\Z} \abs{j-\ell+r\frac{M}{2}}$ with respect to $\frac{M}{2}$.
\begin{figure}[h!]
\begin{center}
			\begin{tikzpicture}[xscale=10]
			\tikzmath{\r = 0.15; \t =0.04; \ct=\t*2; \a=0.3;
				\x1=0.07; \x5=\x1+\t;
				\x2=\x5+\r*1.2; \d1=0.5*(\x2-\x5-\r); \x6=\x2+\t*1.3; \y1=0.5*(\x6-\x2-\t);
				\x3=\x6+\r*1.6; \d2=0.5*(\x3-\x6-\r); \x7=\x3+\t*1.5; \y2=0.5*(\x7-\x3-\t);
				\x4=\x7+\r*1.2; \d3=0.5*(\x4-\x7-\r); \x8=\x4+\t*1.2; \y3=0.5*(\x8-\x4-\t);
				\d0 = 0.5*(1-\x8+\x1-\r);}
			\draw[-][draw=black, very thick] (0,0) -- (1,0);
			\draw [thick] 	(1,0)		node[below]{1} -- (1,0);
			\draw [thick] 	(0,0)		node[below]{0} -- (0,0);
			\node at (\x1,0) [label={[below,shift={(0,-\a)}] {$t_1$}}]{$\bullet$};
			\node at (\x5,0) [label={[below,shift={(0,-\a)}] {$t_5$}}]{$\bullet$};
			\draw [<->,blue](\x1,\a) 	-- (\x5,\a) node[midway,above]{$\frac{\tau}{N}$};
			\draw [|-|,red] (\x5+\d1,\a) -- (\x2-\d1,\a) node[midway,above]{$\frac{\rho}{N}$};
			\node at (\x2,0) [label={[below,shift={(0,-\a)}] {$t_2$}}]{$\bullet$};
			\node at (\x6,0) [label={[below,shift={(0,-\a)}] {$t_6$}}]{$\bullet$};
			\draw [<->,blue](\x2+\y1,\a) 	-- (\x6-\y1,\a) node[midway,above]{$\frac{\tau}{N}$};
			\draw [|-|,red] (\x6+\d2,\a) -- (\x3-\d2,\a) node[midway,above]{$\frac{\rho}{N}$};
			\node at (\x3,0) [label={[below,shift={(0,-\a)}] {$t_3$}}]{$\bullet$};
			\node at (\x7,0) [label={[below,shift={(0,-\a)}] {$t_7$}}]{$\bullet$};
			\draw [<->,blue](\x3+\y2,\a) 	-- (\x7-\y2,\a) node[midway,above]{};
			\draw [|-|,green,dashdotted](\x3,2*\a) 	-- (\x7,2*\a) node[midway,above]{$\frac{c\tau}{N}$};
			\draw [|-|,red] (\x7+\d3,\a) -- (\x4-\d3,\a) node[midway,above]{$\frac{\rho}{N}$};
			\node at (\x4,0) [label={[below,shift={(0,-\a)}] {$t_4$}}]{$\bullet$};
			\node at (\x8,0) [label={[below,shift={(0,-\a)}] {$t_8$}}]{$\bullet$};
			\draw [<->,blue](\x4+\y3,\a) 	-- (\x8-\y3,\a) node[midway,above]{$\frac{\tau}{N}$};
			\draw [|-,red] (\x8+\d0,\a) -- (1,\a) node[midway,above]{$\frac{\rho}{N}$};
			\draw [-|,red] (0,\a) -- (\x1-\d0,\a) node[midway,above]{};
			\end{tikzpicture}
\end{center}
	\caption{Example of a node set with $M=8$ satisfying Def.~\ref{def:pairw}.}\label{fig:pairw}
\end{figure}
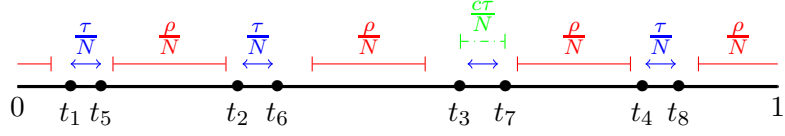
\end{definition}

\begin{definition}\label{def:decompPairwise}
	We define
	\begin{equation*} 
	 A_1:= \pmat{z_j^k}_{\substack{j=1,\dots,M/2\\ \abs{k}\le n}}\in \C^{(M/2)\times N} \quad \text{and} \quad A_2:= \pmat{z_j^k}_{\substack{j=M/2+1,\dots,M\\ \abs{k}\le n}}\in \C^{(M/2)\times N} 
	\end{equation*}
	so that with $K_1:=A_1A_1^*$, $K_2:=A_2A_2^*$ and $B:=A_2A_1^*$ we have the partitioning
	\begin{align}\label{eq:decompPairwise}
		A=\pmat{A_1\\A_2}, \quad K=\pmat{K_1 & B^* \\ B & K_2}.
	\end{align}
	Note that under the assumptions in Definition \ref{def:pairw} the Vandermonde matrices $A_1$ and $A_2$ are each corresponding to nodes that are at least $\rho/N$-separated.
\end{definition}

The proof technique we use is analogous to the one we used in the case of two nearly-colliding nodes.
The difference is that we have a matrix $K_1$ instead of a scalar and the block $B$ is a matrix instead of a vector.
Subsequently, Lemma \ref{la:pairwiseNormK} establishes an upper bound on $\norm{K}$ and Lemmata \ref{la:R1Entry}, \ref{la:restOfOrder2}, and \ref{la:pairwiseNormKInv} establish an upper bound on $\norm{K^{-1}}$.

\begin{lemma}\label{la:pairwiseNormK}
	Under the conditions of Definition \ref{def:pairw}, we have
	\begin{equation*}
		\norm{K}\le 2N \cdot\frac{\rho+1}{\rho}.
	\end{equation*}
\end{lemma}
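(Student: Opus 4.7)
The plan is to mimic the structure of the proof of Lemma \ref{la:2collnormK}, only now with the scalar $N$ replaced by the block $K_1$ and the vector $b$ replaced by the matrix $B = A_2 A_1^*$. Using the block decomposition \eqref{eq:decompPairwise} together with the triangle inequality, I would first split
\begin{equation*}
\norm{K} \le \left\|\pmat{K_1 & 0 \\ 0 & K_2}\right\| + \left\|\pmat{0 & B^* \\ B & 0}\right\|.
\end{equation*}
The first term is block-diagonal so its spectral norm equals $\max(\norm{K_1},\norm{K_2})$, and the second term is Hermitian with eigenvalues $\pm\sigma_i(B)$, so its spectral norm equals $\norm{B}$.

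Next I would apply Theorem \ref{la:condWell} separately to $A_1$ and $A_2$. The key observation is that Definition \ref{def:pairw} forces both node subsets $\{t_1,\dots,t_{M/2}\}$ and $\{t_{M/2+1},\dots,t_M\}$ to be $\rho/N$-separated among themselves, since for $j<\ell$ with both indices in the same half one has $\ell - j < M/2$, hence $\ell \ne j + M/2$ and the second inequality in Definition \ref{def:pairw} applies. Thus each of $A_1$, $A_2$ satisfies the hypotheses of Theorem \ref{la:condWell} with normalized separation distance at least $\rho > 1$, giving
\begin{equation*}
\norm{K_1},\;\norm{K_2} \le N\left(1+\frac{1}{\rho}\right) = N\cdot\frac{\rho+1}{\rho}.
\end{equation*}

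Finally, submultiplicativity gives $\norm{B} = \norm{A_2 A_1^*} \le \norm{A_1}\norm{A_2} = \sqrt{\norm{K_1}\norm{K_2}} \le N(\rho+1)/\rho$, and combining the two contributions yields the desired bound $\norm{K} \le 2N(\rho+1)/\rho$. There is no serious obstacle here: the estimate is a clean block analogue of Lemma \ref{la:2collnormK}, and all that is required is to verify that the separation hypothesis in Definition \ref{def:pairw} indeed guarantees internal $\rho/N$-separation of the two halves, which is immediate from the index restriction $\ell \ne j + M/2$.
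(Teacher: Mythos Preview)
Your proposal is correct and follows essentially the same approach as the paper: the same block splitting via the triangle inequality, the same bound $\norm{B}\le\sqrt{\norm{K_1}\norm{K_2}}$ from submultiplicativity, and the same application of Theorem~\ref{la:condWell} to the two well-separated halves. The only cosmetic differences are that the paper invokes Lemma~\ref{blockGersch} to bound the anti-diagonal block where you use the eigenvalue identity $\pm\sigma_i(B)$ directly, and the paper already records the internal $\rho/N$-separation of the two halves in Definition~\ref{def:decompPairwise}.
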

\begin{proof}
Similar to Lemma \ref{la:2collnormK}, we start by noting that $\norm{B}^2\le \norm{K_1}\norm{K_2}$.
Together with the decomposition \eqref{eq:decompPairwise}, the triangle inequality, Lemma \ref{blockGersch}, and Theorem \ref{la:condWell}, this leads to
	\begin{align*}
		\norm{K} &\le 
		\norm{\pmat{K_1&0\\0&K_2}} +\norm{\pmat{0&B^*\\B&0}}\\
		&\le \max\set{\norm{K_1},\norm{K_2}} + \sqrt{\norm{K_1}\norm{K_2}}
		\le 2N\cdot\frac{\rho+1}{\rho}.
	\end{align*}
\end{proof}

\begin{lemma}\label{la:R1Entry}
Under the conditions of Definition \ref{def:pairw}, $R_1:= B-K_1$ fulfills
	\begin{equation*}
		\norm{R_1} \le N-\dirich{c\tau/N} + Nc\tau\left(\frac{\pi(\log\left\lfloor\frac{M}{4}\right\rfloor+1)}{\rho}+\frac{\pi^2}{6\rho^2}\right).
	\end{equation*}
\end{lemma}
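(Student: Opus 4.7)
The plan is to split $R_1 = D + R'$ into its diagonal and off-diagonal parts and apply the triangle inequality $\norm{R_1} \le \norm{D} + \norm{R'}$. The two summands in the claimed bound correspond respectively to the local distortion from the near-collision of the paired nodes and to the off-diagonal interference from the remaining well-separated nodes, so this split is natural.

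The diagonal part is immediate: entry $(i,i)$ of $R_1$ is $\dirich{t_{M/2+i}-t_i}-N$, and since $\abs{t_{M/2+i}-t_i}_\T \in [\tau/N, c\tau/N] \subseteq [0,1/N]$, monotonicity of $D_n$ on this interval gives $\abs{R_{1,ii}} \le N-\dirich{c\tau/N}$. As $D$ is diagonal, this yields $\norm{D} \le N-\dirich{c\tau/N}$.

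For the off-diagonal part, I would use $\norm{R'} \le \sqrt{\norm{R'}_1\norm{R'}_\infty}$ and estimate the row sum, handling the column sum analogously by symmetry of the block structure. For fixed $i$ and $j\ne i$, the mean value theorem gives
\[ \abs{R'_{ij}} = \abs{\dirich{t_{M/2+i}-t_j}-\dirich{t_i-t_j}} = \abs{\dirichd{\xi_{ij}}}\cdot\abs{t_{M/2+i}-t_i} \le \abs{\dirichd{\xi_{ij}}}\cdot c\tau/N \]
for some $\xi_{ij}$ between $t_i-t_j$ and $t_{M/2+i}-t_j$. I would then mimic the worst-case packing argument in the proof of Lemma \ref{la:btaylor}: for fixed $i$, the nodes $t_\ell$ with $\ell\ne i$ and $\ell\in\{1,\dots,M/2\}$ are $\rho/N$-separated from both $t_i$ and $t_{M/2+i}$, so at most $\lceil M/4\rceil$ lie on each side of $t_i$ and the $k$-th closest yields $\abs{\xi_{ij}} \ge k\rho/N$. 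Inserting the derivative bound $\abs{\dirichd{\xi}}\le \pi N/(2\abs{\xi})+1/(2\xi^2)$ from Lemma \ref{la:dirichbounds} and using $\sum_{k=1}^{K}1/k\le \log K+1$ together with $\sum_{k=1}^{\infty}1/k^2=\pi^2/6$ then gives
\[ \sum_{j\ne i}\abs{R'_{ij}} \le \frac{c\tau}{N}\cdot 2\sum_{k=1}^{\lfloor M/4\rfloor}\left(\frac{\pi N^2}{2k\rho}+\frac{N^2}{2k^2\rho^2}\right) \le Nc\tau\left(\frac{\pi(\log\lfloor M/4\rfloor+1)}{\rho}+\frac{\pi^2}{6\rho^2}\right), \]
which combined with the diagonal bound proves the claim.

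The main obstacle is the clean bookkeeping in the packing argument: the mean value theorem only yields $\abs{\xi_{ij}}\ge k\rho/N - c\tau/N$, so one must absorb the small shift $c\tau/N$ (using $c\tau\le 1<\rho$) without degrading the $1/(k\rho)$ and $1/(k^2\rho^2)$ scalings. This is essentially the same delicate step as in Lemma \ref{la:btaylor}, and I expect the proof to follow that template closely.
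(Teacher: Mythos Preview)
Your approach is correct and essentially identical to the paper's: the paper also bounds the diagonal entries via monotonicity of $D_n$ on $[0,1/N]$ and the off-diagonal entries via the mean value theorem with Lemma~\ref{la:dirichbounds}, then passes to the spectral norm by constructing a real symmetric entrywise majorant $\widetilde R_1$ and invoking $\|R_1\|\le\|\widetilde R_1\|\le\|\widetilde R_1\|_\infty$ (Lemma~\ref{la:normAbsMat}), which amounts to the same row-sum bound as your $\sqrt{\|\cdot\|_1\|\cdot\|_\infty}$ estimate. Your stated obstacle does not actually arise: Definition~\ref{def:pairw} guarantees $|t_{i+M/2}-t_j|_\T\ge\rho/N$ \emph{directly} for every $j\ne i$ (not only $|t_i-t_j|_\T\ge\rho/N$), so summing consecutive $\rho/N$-gaps shows that \emph{both} endpoints of the mean-value interval are $\ge k\rho/N$, and the paper accordingly asserts $\xi_{j\ell}\ge|j-\ell|'\rho/N$ without any $c\tau/N$ correction.
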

\begin{proof}
	The Dirichlet kernel $\dirich{}$ is monotone decreasing on $\interc{0,1/N}$. Hence, for the diagonal entries we obtain
	\begin{equation*}
		\abs{(R_1)_{jj}} = \abs{\dirich{t_j-t_{j+\frac{M}{2}}}-N} = N -\dirich{t_j-t_{j+\frac{M}{2}}} \le N -\dirich{c\tau/N}.
	\end{equation*} 
	The off diagonal entries are bounded by the mean value theorem and Lemma \ref{la:dirichbounds} as
	\begin{align*}
		\abs{(R_1)_{j\ell}}
		&=\abs{\dirich{t_j-t_\ell}-\dirich{t_{j+\frac{M}{2}}-t_\ell}}\\
		&\le \abs{\dirichd{\xi_{j\ell}}}  \frac{c\tau}{N} 
		\le Nc\tau\left(\frac{\pi}{2 N\xi_{j\ell}} + \frac{1}{2N^2\xi_{j\ell}^2} \right),
	\end{align*}
	where $\inter{\abs{t_{j+\frac{M}{2}}-t_\ell}_\T,\abs{t_j-t_\ell}_\T}\ni \xi_{j\ell} \ge \abs{j-\ell}'\rho/N$ implies
	\begin{equation*}
		\abs{(R_1)_{j\ell}} \le Nc\tau \left(\frac{\pi}{2\rho \abs{j-\ell}'} + \frac{1}{2\rho^2(\abs{j-\ell}')^2} \right)=:(\widetilde R_1)_{j\ell}
	\end{equation*}
	for $j,\ell=1,\dots,\frac{M}{2}$, $j\ne \ell$.
	Additionally, we set $(\widetilde R_1)_{j j}:=N -\dirich{c\tau/N}$.
	We bound the spectral norm of $R_1$ by the one of the real symmetric matrix $\widetilde R_1$ using Lemma \ref{la:normAbsMat} and proceed by
	\begin{equation*}
		\norm{R_1}\le\norm{\widetilde R_1}\le\norm{\widetilde R_1}_\infty
		\le N-\dirich{c\tau/N} +  2Nc\tau  \sum_{j=1}^{\lfloor\frac{M}{4}\rfloor} \left(\frac{\pi}{2j\rho}+\frac{1}{2j^2\rho^2}\right),
	\end{equation*}
	from which the assertion follows.
\end{proof}

\begin{lemma}\label{la:restOfOrder2}
Under the conditions of Definition \ref{def:pairw}, $R_1=B-K_1$ and $R_2:= B-K_2$ fulfill
	\begin{equation*}
		\norm{2NI+R_1^*+R_2} \le 2\dirich{\tau/N} + c^2\tau^2 N\left(\frac{\pi^2(\log\lfloor\frac{M}{4}\rfloor+1)}{\rho} + \frac{\pi^3}{3\rho^2} + \frac{2.42}{\rho^3}\right).
	\end{equation*}
\end{lemma}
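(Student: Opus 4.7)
My plan is to study the Hermitian matrix $M := 2NI + R_1^* + R_2$ entrywise and bound its spectral norm by the infinity norm of its entrywise absolute value matrix via Lemma \ref{la:normAbsMat}, in direct analogy with the proof of Lemma \ref{la:R1Entry}. The central observation is that, while $R_1$ and $R_2$ individually display only first-order cancellation in the collision parameter, the combination $R_1^*+R_2$ exhibits \emph{second-order} cancellation, producing the extra factor $c\tau/N$ compared with the bound in Lemma \ref{la:R1Entry}.

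First, I would compute the entries explicitly. Setting $\sigma_j := t_{j+M/2} - t_j$, so that $\tau/N \le |\sigma_j| \le c\tau/N$, a short computation using that $\dirich{}$ is real and even gives $M_{jj} = 2\dirich{\sigma_j}$ and, for $j\neq\ell$,
\begin{equation*}
M_{j\ell} = \dirich{t_j-t_{\ell+M/2}} - \dirich{t_j-t_\ell} + \dirich{t_{j+M/2}-t_\ell} - \dirich{t_{j+M/2}-t_{\ell+M/2}}.
\end{equation*}
Monotonicity of $\dirich{}$ on $[0,1/N]$ together with $c\tau\le 1$ yields $|M_{jj}| \le 2 \dirich{\tau/N}$.

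For the off-diagonal entries I would apply the mean value theorem twice. Introducing $h(x) := \dirich{x-t_{\ell+M/2}} - \dirich{x-t_\ell}$, one has $M_{j\ell} = h(t_j) - h(t_{j+M/2}) = -\sigma_j h'(\eta)$ for some $\eta$ between $t_j$ and $t_{j+M/2}$; since $h'(\eta) = \dirichd{\eta-t_{\ell+M/2}} - \dirichd{\eta-t_\ell}$, a second application yields $M_{j\ell} = \sigma_j\sigma_\ell\,\dirichdd{\xi_{j\ell}}$ for some $\xi_{j\ell}$ lying in the convex hull of the four differences $t_a - t_b$ with $a\in\{j,j+M/2\}$, $b\in\{\ell,\ell+M/2\}$. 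Exactly as in the proof of Lemma \ref{la:R1Entry}, and using $c\tau\le 1 < \rho$, the wrap-around distance $|\xi_{j\ell}|_\T$ is bounded below by $|j-\ell|'\rho/N$. Invoking Lemma \ref{la:dirichbounds} to estimate $|\dirichdd{\xi_{j\ell}}|$ and combining with $|\sigma_j\sigma_\ell|\le c^2\tau^2/N^2$ then produces an off-diagonal bound of the form
\begin{equation*}
|M_{j\ell}| \le c^2\tau^2 N\!\left(\frac{\pi^2/2}{|j-\ell|'\rho} + \frac{\pi}{(|j-\ell|'\rho)^2} + \frac{1}{(|j-\ell|'\rho)^3}\right).
\end{equation*}

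Finally, bounding $\|M\|$ by the infinity norm of its entrywise absolute value matrix via Lemma \ref{la:normAbsMat} and summing each row gives a diagonal contribution $2\dirich{\tau/N}$ together with an off-diagonal contribution of at most twice the sum over $k=1,\dots,\lfloor M/4\rfloor$ of the above expression with $|j-\ell|'$ replaced by $k$, since at most two indices $\ell\neq j$ achieve each wrap-around distance $k$. Applying $\sum_{k=1}^{\lfloor M/4\rfloor} 1/k \le \log\lfloor M/4\rfloor + 1$, $\sum_{k\ge 1} 1/k^2 = \pi^2/6$, and $\sum_{k\ge 1} 1/k^3 \le 1.21$ then reproduces the three claimed constants $\pi^2$, $\pi^3/3$, and $2.42$. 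The main obstacle I expect is the careful bookkeeping in the nested mean value theorem, in particular verifying that the intermediate point $\xi_{j\ell}$ stays at distance at least $|j-\ell|'\rho/N$ from every integer, so that Lemma \ref{la:dirichbounds} can be applied with the desired exponents of $\rho$.
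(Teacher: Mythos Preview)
Your proposal is correct and follows essentially the same strategy as the paper: compute the entries of $2NI+R_1^*+R_2$, exploit the second-order cancellation in the off-diagonal terms to extract a $D_n''$ bound via Lemma~\ref{la:dirichbounds}, and then control the spectral norm by the row-sum norm. The only difference is cosmetic: you obtain the single factor $\sigma_j\sigma_\ell\,D_n''(\xi_{j\ell})$ by a nested mean value theorem, whereas the paper Taylor-expands all four Dirichlet values to second order about the midpoint $\hat h=t_j-t_\ell+\tfrac{h_j-h_\ell}{2}$ so that the constant and linear contributions cancel by symmetry---both routes give the identical entrywise bound and hence the same constants, and your concern about locating $\xi_{j\ell}$ at wrap-around distance $\ge|j-\ell|'\rho/N$ is exactly the bookkeeping the paper also relies on.
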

\begin{proof}
	First, note that
	\begin{equation*}
		(R_1^*+R_2)_{j\ell} = \dirich{t_{j+\frac{M}{2}}-t_\ell} + \dirich{t_j-t_{\ell+\frac{M}{2}}} - \dirich{t_{j+\frac{M}{2}}-t_{\ell+\frac{M}{2}}} - \dirich{t_j-t_\ell}.
	\end{equation*}
	Monotonicity of the Dirichlet kernel $\dirich{}$ on $t\in \interc{0,1/N}$ gives 
	\begin{equation*}
		\abs{(2NI+R_1^*+R_2)_{jj}}=2\abs{\dirich{t_{j+\frac{M}{2}}-t_j}} \le 2\dirich{\tau/N}
	\end{equation*}
	for $j=\ell$.
	For each fixed off diagonal entry $j\ne \ell$, the matrix $2NI$ has no contribution.
	We write the node $t_{j+M/2}$ as a perturbation of $t_j$ by $h_j:= t_{j+M/2}-t_j$ and expand the Dirichlet kernel by its Taylor polynomial of degree 2 in the point $\hat h :=t_j - t_\ell + \frac{h_j - h_\ell}{2}$. Using
	\begin{equation*}
	 D_n(h)=D_n(\hat h)+D_n(\hat h)(h-\hat h)+\frac{D_n''(\xi)}{2}(h-\hat h)^2
	\end{equation*}
        for some $\xi\in[\hat h,h]\cup[h,\hat h]$, the constant term as well as the linear term cancel out and we get
	\begin{align*}
		&D_n(t_j+h_j-t_\ell) + D_n(t_j-t_\ell-h_\ell) - D_n(t_j+h_j - t_\ell-h_\ell) - D_n(t_j-t_\ell) \\
		&=\frac{1}{8} \left( D_n''(\xi_1) (h_j+h_\ell)^2
		+ D_n''(\xi_2) (h_j+h_\ell)^2
		+ D_n''(\xi_3) (h_j-h_\ell)^2
		+ D_n''(\xi_4) (h_j-h_\ell)^2 \right).
	\end{align*}
	Lemma \ref{la:dirichbounds} and $\xi_1,\dots,\xi_4 \ge \abs{j-\ell}'\rho /N$ imply
	\begin{align*}
		\abs{(R_1^*+R_2)_{j\ell}}
		\le &\frac{N^3}{4} \left(\frac{\pi^2}{2\abs{j-\ell}'\rho}+\frac{\pi}{(\abs{j-\ell}')^2\rho^2} + \frac{1}{ (\abs{j-\ell}')^3\rho^3} \right)\\
		&\cdot \left((h_j+h_\ell)^2+(h_j-h_\ell)^2\right)
	\end{align*}
	and hence by $h_j,h_{\ell}\le c\tau/N$
	\begin{equation*}
		\abs{(2NI+R_1^*+R_2)_{j\ell}} \le N c^2\tau^2\left(\frac{\pi^2}{2\abs{j-\ell}'\rho}+\frac{\pi}{\abs{j-\ell}'^2\rho^2} + \frac{1}{ \abs{j-\ell}'^3\rho^3} \right).
	\end{equation*}
	The matrix $2NI+R_1^*+R_2$ is real symmetric so that
	\begin{align*}
		\norm{2NI+R_1^*+R_2} &\le \norm{2NI+R_1^*+R_2}_\infty \\
			&\le 2 \dirich{\tau/N} + 2\sum_{j=1}^{\lfloor\frac{M}{4}\rfloor} N c^2\tau^2\left(\frac{\pi^2}{2 j\rho}+\frac{\pi}{j^2\rho^2} + \frac{1}{j^3\rho^3} \right)\\
			&\le 2\dirich{\tau/N} + 2c^2\tau^2 N\left(\frac{\pi^2(\log\lfloor\frac{M}{4}\rfloor+1)}{2\rho} + \frac{\pi^3}{6\rho^2} + \frac{1.21}{\rho^3}\right)
	\end{align*}
	and therefore the result holds.
\end{proof}

\begin{lemma}\label{la:pairwiseNormKInv}
Under the conditions of Definition \ref{def:pairw} with $\tau\le1/2$ and $\rho\ge 2$, such that
	\begin{align*}
	\tilde C(\tau,\rho,c,M)
		&:= 2 - \frac{c^2\pi^2(\log\lfloor\frac{M}{4}\rfloor+1)}{\rho}-\frac{c^2\pi^3}{3\rho^2}-\frac{2.42c^2 }{\rho^3} \\
		&-\frac{\rho}{(\rho-1)} \left(  \frac{c^2\pi^2}{6}\tau + \frac{c\pi(\log\lfloor\frac{M}{4}\rfloor+1)}{\rho}+\frac{c\pi^2}{6\rho^2} \right)^2
	\end{align*}
	is positive, we have
	\begin{equation*}
		\norm{K^{-1}}\le \frac{C(\tau,\rho,c,M)}{N\tau^2},
	\end{equation*}
	where
	\begin{equation*}
	C(\tau,\rho,c,M):= \left( \frac{2\rho}{\rho-1}+\sqrt{\frac{\rho+1}{\rho-1}} \right)/\tilde C(\tau,\rho,c,M).
	\end{equation*}
	Figure \ref{fig:CtpcM} visualizes the values of the constant $\tilde{C}(\tau,\rho,c,M)$ with respect to $\rho$ and $\tau$.
	Please note that
	i) increasing the constant $c$ by a factor $\sqrt{2}$ has to be compensated approximately by halving $\tau$ and doubling $\rho$ and
	ii) increasing the number of nodes $M$ from $4$ to $64$ has to be compensated approximately by tripling $\rho$.
\end{lemma}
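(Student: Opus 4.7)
The plan is to mimic the Schur complement strategy of Lemma \ref{la:2collnormKinv} at the level of matrix blocks. Starting from the decomposition \eqref{eq:decompPairwise}, the standard identity
\begin{equation*}
 K^{-1}=\pmat{I & 0 \\ -K_2^{-1}B & I}\pmat{S^{-1} & 0 \\ 0 & K_2^{-1}}\pmat{I & -B^*K_2^{-1} \\ 0 & I},\qquad S:=K_1-B^*K_2^{-1}B,
\end{equation*}
combined with Lemma \ref{blockGersch} and with $\|K_2^{-1}B\|=\|(A_2A_2^*)^{-1}A_2A_1^*\|\le \|A_2^\dagger\|\,\|A_1\|\le \sqrt{(\rho+1)/(\rho-1)}$ from Theorem \ref{la:condWell}, reduces the task to
\begin{equation*}
 \|K^{-1}\|\le\left(\tfrac{2\rho}{\rho-1}+\sqrt{\tfrac{\rho+1}{\rho-1}}\right)\max\{\|S^{-1}\|,\|K_2^{-1}\|\}.
\end{equation*}
Theorem \ref{la:condWell} gives $\|K_2^{-1}\|\le \rho/(N(\rho-1))$, so the heart of the proof is to show $\|S^{-1}\|\le 1/(N\tau^2\tilde C(\tau,\rho,c,M))$; under $\tau\le 1/2$ and $\rho\ge 2$ this bound automatically dominates $\|K_2^{-1}\|$, and the announced constant $C(\tau,\rho,c,M)$ drops out.

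The crucial step is the following matrix analogue of the scalar identity from the proof of Lemma \ref{la:2collnormKinv}. Expanding $B^*K_2^{-1}B$ via $B=K_2+R_2$ yields $K_2+R_2+R_2^*+R_2^*K_2^{-1}R_2$; subtracting from $K_1$ and using both $K_1-K_2=R_2-R_1$ and the Hermitian identity $R_1+R_2^*=R_1^*+R_2$ (which itself follows from $R_1-R_2=K_2-K_1$ being Hermitian) gives
\begin{equation*}
 S = 2NI - (2NI+R_1^*+R_2) - R_2^*K_2^{-1}R_2.
\end{equation*}
This is precisely the decomposition whose two nontrivial pieces Lemmata \ref{la:R1Entry} and \ref{la:restOfOrder2} are designed to control. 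Weyl's inequality applied to the three Hermitian summands then yields
\begin{equation*}
 \lambda_{\min}(S)\ge 2N-\|2NI+R_1^*+R_2\|-\|R_2\|^2\|K_2^{-1}\|.
\end{equation*}

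Assembling $\tilde C$ is then bookkeeping. The Dirichlet estimate $N-D_n(\tau/N)\ge N\tau^2$ from Lemma \ref{la:dirichbounds}, combined with Lemma \ref{la:restOfOrder2}, produces
\begin{equation*}
 2N-\|2NI+R_1^*+R_2\|\ge N\tau^2\left(2-\tfrac{c^2\pi^2(\log\lfloor M/4\rfloor+1)}{\rho}-\tfrac{c^2\pi^3}{3\rho^2}-\tfrac{2.42c^2}{\rho^3}\right),
\end{equation*}
which is the first line of $\tilde C$. The proof of Lemma \ref{la:R1Entry} is symmetric in $A_1$ and $A_2$, so it also gives $\|R_2\|\le Nc\tau\bigl(\tfrac{c\tau\pi^2}{6}+\tfrac{\pi(\log\lfloor M/4\rfloor+1)}{\rho}+\tfrac{\pi^2}{6\rho^2}\bigr)$; squaring and multiplying by $\|K_2^{-1}\|\le \rho/(N(\rho-1))$ reproduces the squared correction on the second line of $\tilde C$. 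Summing the two contributions yields $\lambda_{\min}(S)\ge N\tau^2\tilde C(\tau,\rho,c,M)$, from which the claimed estimate for $\|K^{-1}\|$ follows.

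The main obstacle is finding the correct algebraic rewriting of $S$. The symmetric split $B=K_1+R_1$, $B^*=K_1+R_1^*$ produces cross terms of the form $K_1K_2^{-1}R_1$ whose operator norm is of order $N$ and which do not cancel; only the asymmetric split $B=K_2+R_2$, $B^*=K_2+R_2^*$, together with the elimination $K_1-K_2=R_2-R_1$ and the Hermitian identity $R_1+R_2^*=R_1^*+R_2$, collapses all non-quadratic remainders into the single shifted block $2NI+R_1^*+R_2$ whose norm Lemma \ref{la:restOfOrder2} is tailored to bound. Once this identity is in place, the rest of the argument is routine linear-algebra bookkeeping.
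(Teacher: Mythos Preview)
Your proof is correct and follows essentially the same approach as the paper. The only cosmetic differences are that the paper takes the Schur complement with respect to $K_1$ (yielding $K_2-BK_1^{-1}B^*$ and the quadratic remainder $R_1K_1^{-1}R_1^*$) rather than $K_2$, and that the paper invokes Lemma~\ref{la:neumann} with $\eta=2N$ instead of Weyl's inequality; since the setup is symmetric in $A_1,A_2$ and the Schur complement is Hermitian, both choices lead to the identical constant $\tilde C(\tau,\rho,c,M)$, and your Weyl argument even spares the auxiliary projector estimate $\|K_2-BK_1^{-1}B^*\|<2N$ that the paper needs before applying the Neumann lemma.
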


\begin{figure}[h!]
	\centering
	\begin{subfigure}{0.32\textwidth}
		\includegraphics[width=0.9\linewidth]{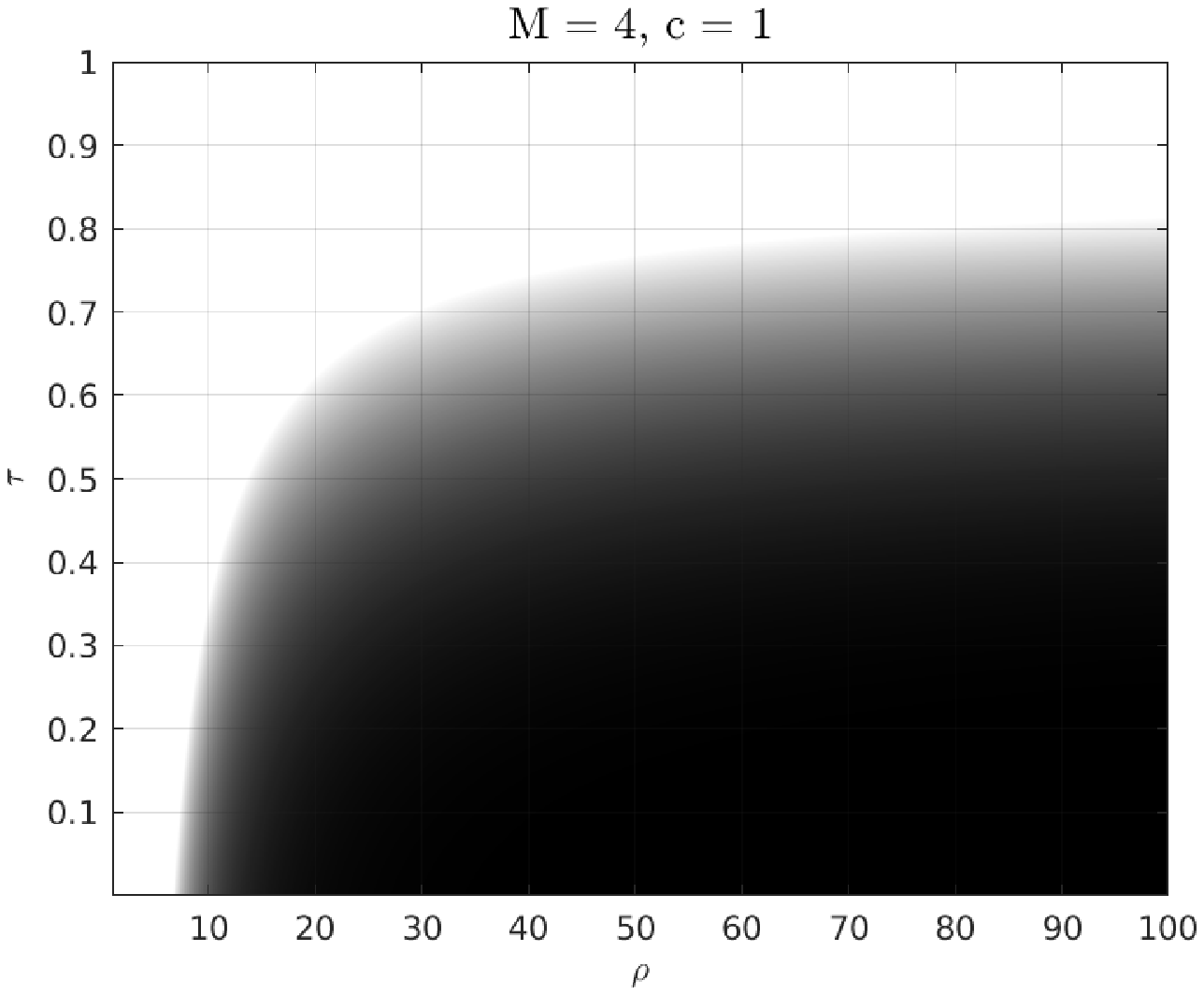}
	\end{subfigure}
	\begin{subfigure}{0.32\textwidth}
		\includegraphics[width=0.9\linewidth]{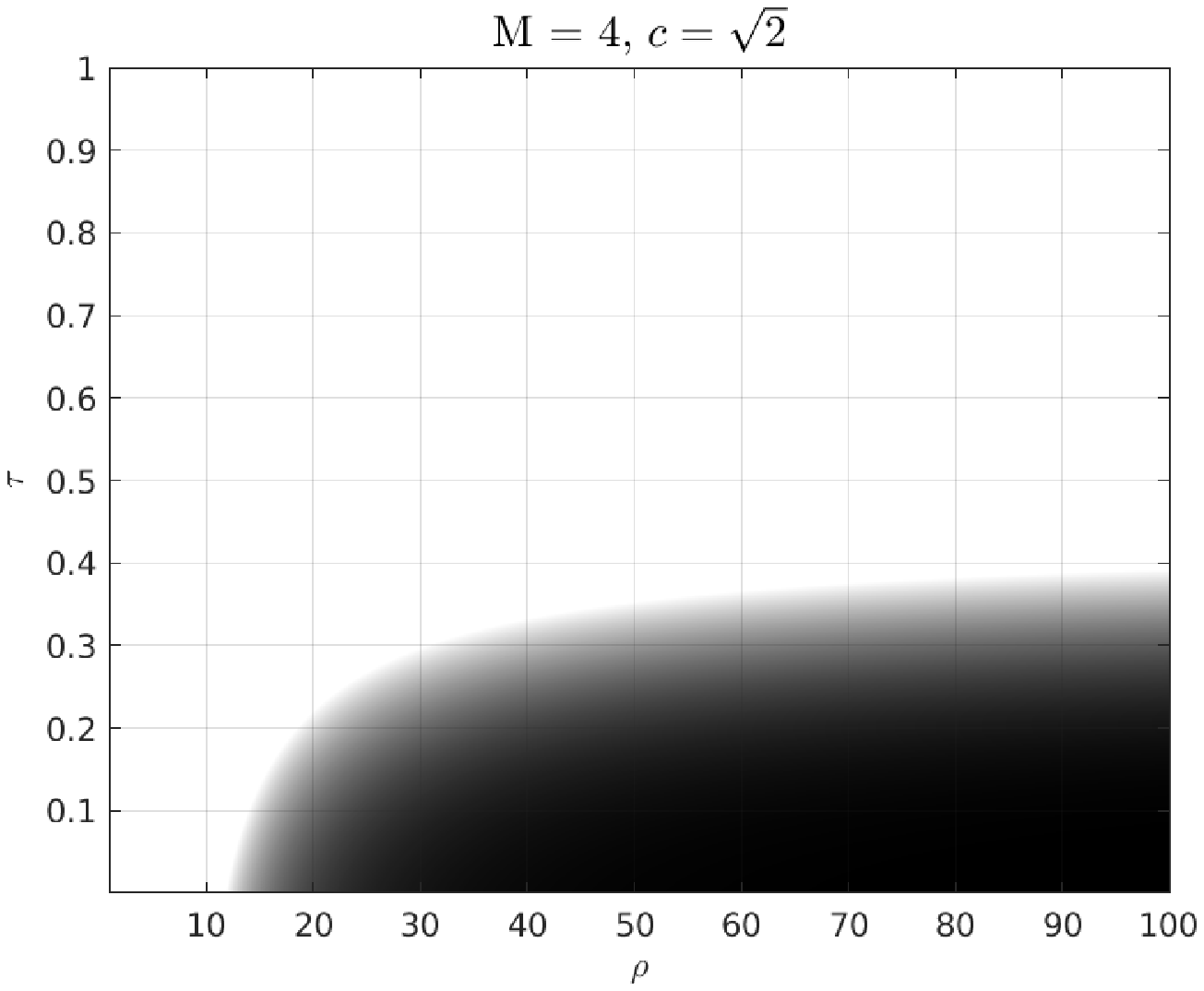}
	\end{subfigure}
	\begin{subfigure}{0.33\textwidth}
		\includegraphics[width=1.0\linewidth]{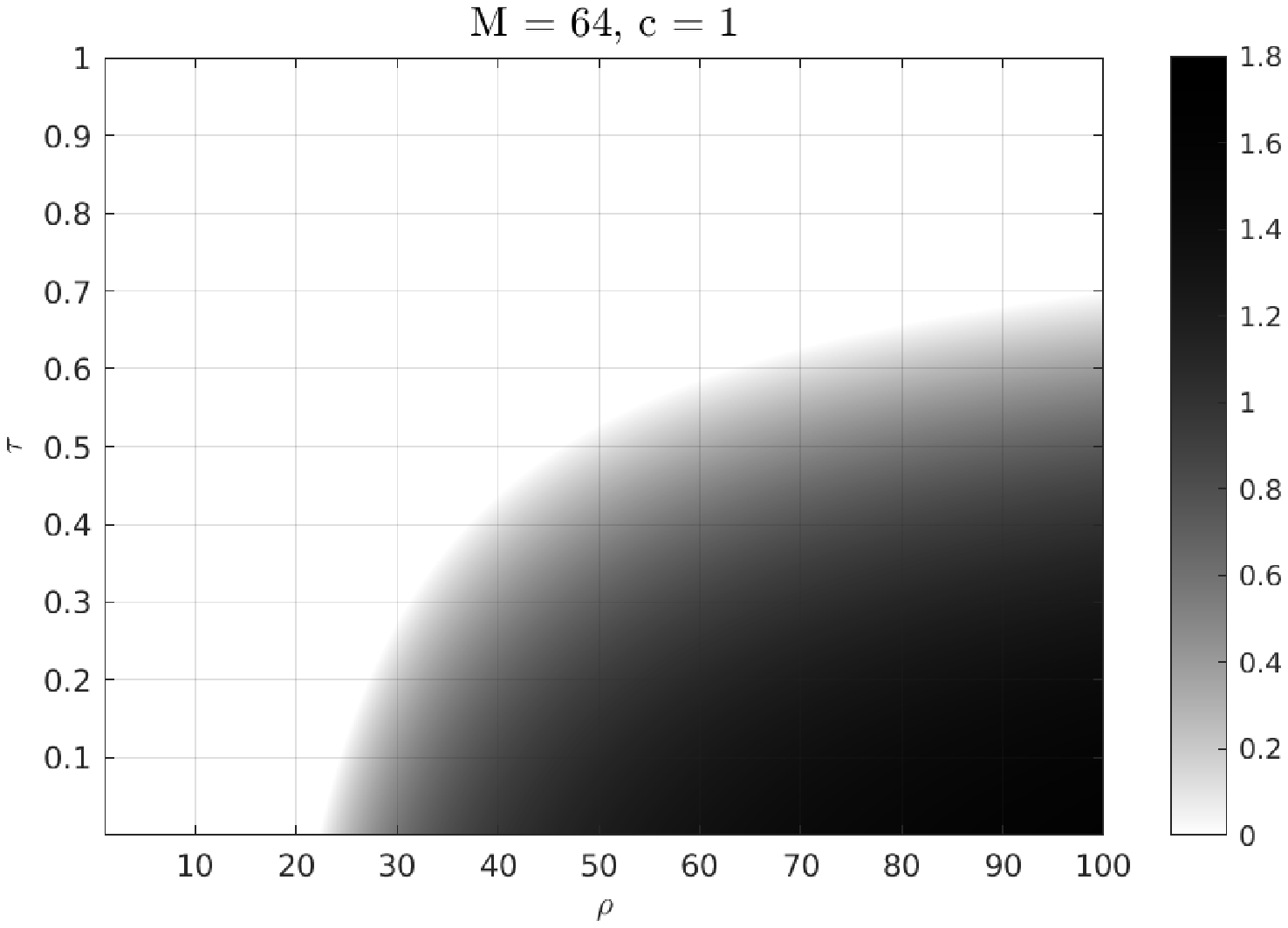}
	\end{subfigure}
	\caption{Values of $\tilde{C}(\tau,\rho,c,M)$ in Lemma \ref{la:pairwiseNormKInv} depending on $\tau$ and $\rho$ for different $M$ and $c$. Negative values are set to zero.}\label{fig:CtpcM}
\end{figure}

\begin{proof}
We proceed analogously to Lemma \ref{la:2collnormKinv} and apply Lemma \ref{la:schurDecomp} to the matrix $K$ decomposed as in $\eqref{eq:decompPairwise}$ and obtain
	\begin{align}\label{eq:pairwSchurdecomp}
		\|K^{-1}\| \le \max\{\|K_1^{-1}\|,\|(K_2 - BK_1^{-1}B^*)^{-1}\|\} \norm{\pmat{I & 0 \\ -BK_1^{-1} & I}}^2.
	\end{align}
	Definition \ref{def:decompPairwise} and Theorem \ref{la:condWell} yield
	\begin{equation*}
	 \norm{BK_1^{-1}}\le \norm{A_2}\norm{A_1^\dagger}\le\sqrt{\frac{\rho+1}{\rho-1}},
	\end{equation*}
	together with Lemma \ref{blockGersch}, we obtain
	\begin{equation*}
		\norm{\pmat{I & 0 \\ -BK_1^{-1} & I}}^2
		\le 1+\norm{BK_1^{-1}}+\norm{BK_1^{-1}}^2
		\le \frac{2\rho}{\rho-1}+\sqrt{\frac{\rho+1}{\rho-1}}.
	\end{equation*}
	
	Now, we estimate $\norm{(K_2 - BK_1^{-1}B^*)^{-1}}$, which is done by the following steps:\\
	\begin{enumerate}
	\item First, note that $I-A_1^\dagger A_1$ is an orthogonal projector and thus Theorem \ref{la:condWell} implies
		\begin{equation*}
		 \norm{K_2-BK_1^{-1}B^*} \le \norm{A_2}\norm{I-A_1^\dagger A_1}\norm{A_2^*} \le \norm{A_2}^2<2N.
		\end{equation*}
		We apply Lemma \ref{la:neumann} with $\eta=2N$, use the identities $R_1=B-K_1$ and $R_2=B-K_2$, apply the triangular inequality, and the sub-multiplicativity of the matrix norm to get
		\begin{align}\label{eq:i)}
		\begin{split}
			\norm{(K_2 - BK_1^{-1}B^*)^{-1}}
			&\le \frac{1}{2N-\norm{2NI-K_2+BK_1^{-1}B^*}}\\
			&\le \frac{1}{2N-\norm{2NI+R_1^*+R_2}-\norm{R_1}^2\norm{K_1^{-1}}}.
		\end{split}
		\end{align}
	\item Lemma \ref{la:restOfOrder2} leads to
		\begin{align*}
			2N-\norm{2NI + R_1^*+R_2}
			\ge&\; 2(N-\dirich{\tau/N})\\
			&- c^2\tau^2 N\left(\frac{\pi^2(\log\lfloor\frac{M}{4}\rfloor+1)}{\rho} + \frac{\pi^3}{3\rho^2} + \frac{2.42}{\rho^3}\right).
		\end{align*}
	\item We apply Theorem \ref{la:condWell} and Lemma \ref{la:R1Entry} to get
		\begin{align*}
			\norm{R_1}^2 \norm{K_1^{-1}}
			\le \frac{\rho}{N(\rho-1)} \left[ N-\dirich{c\tau/N} + Nc\tau\left(\frac{\pi(\log\left\lfloor\frac{M}{4}\right\rfloor+1)}{\rho}+\frac{\pi^2}{6\rho^2}\right) \right]^2.
		\end{align*}
	\item We use the estimates for the Dirichlet kernel $N-\dirich{\tau/N} \ge N\tau^2$ in ii) and $N-\dirich{c\tau/N} \le N\frac{\pi^2}{6}c^2\tau^2$ in iii), see Lemma \ref{la:dirichbounds}, and insert this in \eqref{eq:i)} to get finally
		\begin{align*}
			\norm{(K_2 - BK_1^{-1}B^*)^{-1}}
			\le&\; \frac{1}{N\tau^2} \Bigg[ 2 -
			\frac{c^2\pi^2(\log\lfloor\frac{M}{4}\rfloor+1)}{\rho}-\frac{c^2\pi^3}{3\rho^2}-\frac{2.42c^2 }{\rho^3} \\
			&-\frac{\rho}{(\rho-1)} \left(  \frac{c^2\pi^2}{6}\tau + \frac{c\pi(\log\lfloor\frac{M}{4}\rfloor+1)}{\rho}+\frac{c\pi^2}{6\rho^2} \right)^2 \Bigg]^{-1}.
 		\end{align*}
	\end{enumerate}
	
	This upper bound also bounds the maximum in \eqref{eq:pairwSchurdecomp} since for all $\tau \le 1/2$ and $\rho\ge 2$ together with Theorem \ref{la:condWell}
	\begin{equation*}
	\norm{K_1^{-1}}\le \frac{2}{N} \le \frac{1}{2N \tau^2} \le \frac{1}{N\tau^2} [2- \cdots]^{-1}.
	\end{equation*}
	
\end{proof}

\begin{thm}[Upper bound]\label{thm:pairwUp}
	Under the conditions of Definition \ref{def:pairw} with $M\ge 4$, $\tau \le \tau_{max} =\frac{1}{4c^2}$ and $\rho \ge \rho_{\min} =10c^2(\log\lfloor\frac{M}{4} \rfloor+1)$, we have
	\begin{equation*}
		\cond(A) \le \frac{5}{\tau}.
	\end{equation*}
\end{thm}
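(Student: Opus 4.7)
The plan is to combine the two main workhorse lemmas of this section, namely Lemma~\ref{la:pairwiseNormK} and Lemma~\ref{la:pairwiseNormKInv}, via the identity
\begin{equation*}
\cond(A)^2 \;=\; \|K\|\cdot\|K^{-1}\|,
\end{equation*}
and then verify that, under the stronger parameter restrictions $\tau\le 1/(4c^2)$ and $\rho\ge 10c^2(\log\lfloor M/4\rfloor+1)$, the resulting constants multiply to something bounded by $25$. Lemma~\ref{la:pairwiseNormK} immediately delivers $\|K\|\le 2N(\rho+1)/\rho$ without any further assumption, so no work is needed there.

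The real task is to confirm that the hypothesis of Lemma~\ref{la:pairwiseNormKInv} is met, i.e.~that $\tilde C(\tau,\rho,c,M)$ is positive, and in fact bounded below by a numerical constant. To do this I would substitute the worst-case values $\tau=1/(4c^2)$ and $\rho=10c^2(\log\lfloor M/4\rfloor+1)$ into $\tilde C$ and simplify term by term. The four negative contributions coming from Lemma~\ref{la:restOfOrder2} behave like $\pi^2/10$, $\pi^3/(3\rho^2)$, $2.42c^2/\rho^3$ (both of which are $\ll 1$ because $\rho\ge 10$) and, in the squared term from Lemma~\ref{la:R1Entry}, roughly $\pi^2/24$, $\pi/(10c)$ and $\pi^2/(600c^3)$; with the factor $\rho/(\rho-1)\le 10/9$ one checks that $\tilde C\gtrsim 0.29$. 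In parallel, with $\rho\ge 10$ one has $2\rho/(\rho-1)+\sqrt{(\rho+1)/(\rho-1)}\le 20/9+\sqrt{11/9}<10/3$, hence $C(\tau,\rho,c,M)\le (10/3)/0.29<12$.

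Putting everything together,
\begin{equation*}
\cond(A)^2 \le 2N\cdot\frac{\rho+1}{\rho}\cdot\frac{C(\tau,\rho,c,M)}{N\tau^2} \le 2\cdot\frac{11}{10}\cdot\frac{12}{\tau^2} < \frac{25}{\tau^2},
\end{equation*}
and taking square roots gives $\cond(A)\le 5/\tau$.

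The main obstacle is less conceptual than bookkeeping: one has to keep track of how the parameters $c$, $M$ and $\rho$ enter the many competing error terms and choose the hypotheses $\tau_{\max}=1/(4c^2)$ and $\rho_{\min}=10c^2(\log\lfloor M/4\rfloor+1)$ so that \emph{both} the log-factor times $c^2$ contributions and the quadratic square-root contributions simultaneously stay well below~$2$. Choosing $\rho$ proportional to $c^2(\log\lfloor M/4\rfloor+1)$ kills the $c^2(\log\lfloor M/4\rfloor+1)/\rho$-type terms, while $\tau\lesssim 1/c^2$ controls the term $c^2\pi^2\tau/6$ inside the square in the definition of $\tilde C$. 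The constants $4$ and $10$ appearing in $\tau_{\max}$ and $\rho_{\min}$ are chosen just large enough so that the final numerical verification yields a clean $5/\tau$, matching the lower bound from Theorem~\ref{thm:condlowerbound} up to an absolute factor.
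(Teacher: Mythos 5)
Your plan is exactly the paper's: combine Lemma~\ref{la:pairwiseNormK} with Lemma~\ref{la:pairwiseNormKInv}, use monotonicity of $C(\tau,\rho,c,M)$ in $\tau,\rho,c,M$ to reduce to the boundary case $\tau=1/4$, $\rho=10$, $c=1$, $M=4$, and then check the numerics. That is correct in structure, but your bookkeeping is too loose to close the estimate.

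Concretely, with $\|K\|\le 2N\cdot\frac{\rho+1}{\rho}\le 2.2N$ (at $\rho=10$) and your rounded bound $C<12$, the chain
\[
\cond(A)^2\le 2\cdot\tfrac{11}{10}\cdot\tfrac{12}{\tau^2}<\tfrac{25}{\tau^2}
\]
is false: $2\cdot 1.1\cdot 12=26.4>25$. You lose the last inequality because you rounded $\tilde C\approx 0.296$ down to $0.29$ and $2\rho/(\rho-1)+\sqrt{(\rho+1)/(\rho-1)}\approx 3.33$ up to $10/3$, giving $C\lesssim 11.5$, and then further relaxed to $12$. What you actually need is $C\le 25/2.2\approx 11.36$. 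The paper keeps the numerics tight enough to get $C(1/4,10,1,4)\le 11.3$ and then $2.2\cdot 11.3=24.86<25$, so $\cond(A)\le\sqrt{24.86}/\tau<5/\tau$. So the approach is right, but you must carry $\tilde C\approx 0.296$ (not $0.29$) and $C\le 11.3$; with the sharper value your final line becomes a genuine inequality.

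One further small point: you should state explicitly that $\tau\le 1/(4c^2)\le 1/4\le 1/2$ and $\rho\ge 10c^2\ge 10\ge 2$, so the standing hypotheses $\tau\le 1/2$, $\rho\ge 2$ of Lemma~\ref{la:pairwiseNormKInv} hold, and also actually verify (as the paper asserts) the monotonicity in $c$ and $M$ of $C(\tfrac{1}{4c^2},10c^2(\log\lfloor M/4\rfloor+1),c,M)$ that justifies reducing to $c=1$, $M=4$; you mention this as the ``main obstacle'' without carrying it out.
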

\begin{proof}
	In Lemma $\ref{la:pairwiseNormKInv}$ the constant $C(\tau,\rho,c,M)$ is monotone increasing in $\tau$ and monotone decreasing in $\rho$. Hence, after plugging in the bounds for $\tau$ and $\rho$ in our assumptions, it is easy to see that the constant $C(\frac{1}{4c^2},10c^2(\log\lfloor\frac{M}{4} \rfloor+1),c,M)$ is monotone decreasing in $c$ and $M$, respectively. Therefore, we get
	$C(\tau,\rho,c,M)\le C(1/4,10,1,4) \le 11.3$, so that $\norm{K^{-1}}\le 11.3N^{-1}\tau^{-2}$. Together with the bound $\norm{K}\le 22N/10=2.2N$ from Lemma \ref{la:pairwiseNormK}, we obtain the result.
\end{proof}

If each pair of nearly-colliding nodes has the same separation distance, i.e. $c=1$, we can improve the upper bound in the sense that restrictions on $\tau$ except for $\tau\le 1$ can be dropped.
In order to obtain the same constant, we have to increase the restrictions on $\rho$ slightly.
\begin{lemma}\label{la:pairwiseNormKInvc1}
	Under the conditions of Definition \ref{def:pairw} with $c=1$, such that
	\begin{align*}
		\tilde C(\rho,M) &:= 2 - \frac{\pi^2(\log\lfloor\frac{M}{4}\rfloor + 1)}{\rho} - \frac{\pi^3}{3\rho^2}- \frac{2.42}{\rho^3}\\
		& -\frac{\rho}{\rho-1}
		-  \frac{2\pi(\log\lfloor\frac{M}{4}\rfloor+1)}{(\rho-1)}-\frac{\pi^2}{3\rho(\rho-1)}\\
		&- \frac{\pi^2(\log\lfloor\frac{M}{4}\rfloor+1)^2}{\rho(\rho-1)} -\frac{\pi^3(\log\lfloor\frac{M}{4}\rfloor+1)}{3\rho^2(\rho-1)}- \frac{\pi^4}{36\rho^3(\rho-1)}
	\end{align*}
	is positive, we have
	\begin{equation*}
		\norm{K^{-1}}\le \frac{C(\rho,M)}{N\tau^2},
	\end{equation*}
	where $C(\rho,M):= \left( \frac{2\rho}{\rho-1}+\sqrt{\frac{\rho+1}{\rho-1}} \right) / \tilde C(\rho,M)$.
\end{lemma}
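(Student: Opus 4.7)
My approach mirrors the proof of Lemma \ref{la:pairwiseNormKInv} with $c=1$, differing only at the single point where the diagonal contribution to $\|R_1\|$ is estimated. The Schur complement decomposition \eqref{eq:pairwSchurdecomp} is used verbatim, so the triangular factor produces the prefactor $\frac{2\rho}{\rho-1}+\sqrt{\frac{\rho+1}{\rho-1}}$ exactly as before, and the Schur complement is inverted via
\begin{equation*}
\|(K_2-BK_1^{-1}B^*)^{-1}\|\le \bigl(2N-\|2NI+R_1^*+R_2\|-\|R_1\|^2\|K_1^{-1}\|\bigr)^{-1}
\end{equation*}
through Lemma \ref{la:neumann}, just as in the proof of Lemma \ref{la:pairwiseNormKInv}.

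For the contribution of $\|2NI+R_1^*+R_2\|$, I apply Lemma \ref{la:restOfOrder2} with $c=1$ together with the lower bound $N-D_n(\tau/N)\ge N\tau^2$ from Lemma \ref{la:dirichbounds}. Division by $N\tau^2$ yields exactly the first line $2-\pi^2 L/\rho-\pi^3/(3\rho^2)-2.42/\rho^3$ of $\tilde C(\rho,M)$, where $L:=\log\lfloor M/4\rfloor+1$; no new idea is required here.

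The decisive step is the estimation of $\|R_1\|^2\|K_1^{-1}\|$. In Lemma \ref{la:pairwiseNormKInv} the diagonal term $N-D_n(c\tau/N)$ of $\|R_1\|$ was bounded by the quadratic estimate $N\pi^2 c^2\tau^2/6$, so that after dividing by $N\tau^2$ a residual factor of $\tau$ survives in $\tilde C$, which is precisely what forces the restriction $\tau\le 1/(4c^2)$. For $c=1$ I would instead use the \emph{linear} upper bound $N-D_n(\tau/N)\le N\tau$ on $[0,1]$, a consequence of $\sin(\pi\tau)\ge\pi\tau(1-\tau)$ for $\tau\in[0,1]$ and hence a standard bound available from Lemma \ref{la:dirichbounds}. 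Combined with the off-diagonal estimate from Lemma \ref{la:R1Entry} this gives
\begin{equation*}
\|R_1\|\le N\tau\Bigl(1+\frac{\pi L}{\rho}+\frac{\pi^2}{6\rho^2}\Bigr).
\end{equation*}
Squaring, multiplying by $\|K_1^{-1}\|\le \rho/(N(\rho-1))$ from Theorem \ref{la:condWell}, dividing by $N\tau^2$, and expanding the square term by term then produces exactly the remaining six summands in $\tilde C(\rho,M)$, with no residual $\tau$-dependence. Assembling these contributions gives $\|(K_2-BK_1^{-1}B^*)^{-1}\|\le 1/(N\tau^2\tilde C(\rho,M))$.

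The one point that needs an extra word, and what I expect to be the main (minor) obstacle, is that the maximum in the Schur bound \eqref{eq:pairwSchurdecomp} must still be controlled by the Schur complement contribution over the full range $\tau\le 1$, rather than only $\tau\le 1/2$ as in Lemma \ref{la:pairwiseNormKInv}. This reduces to verifying $\tilde C(\rho,M)\le (\rho-1)/(\rho\tau^2)$, which for $\tau\le 1$ follows from the crude upper estimate $\tilde C(\rho,M)\le 2-\rho/(\rho-1)=(\rho-2)/(\rho-1)<(\rho-1)/\rho$ valid for $\rho>1$, using only that all other terms in the definition of $\tilde C$ are non-negative. With this check the bound $\|K^{-1}\|\le C(\rho,M)/(N\tau^2)$ follows by multiplying with the triangular-factor norm.
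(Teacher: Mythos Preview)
Your proposal is correct and follows the paper's argument essentially verbatim; the only deviation is in how you treat the diagonal part $N-D_n(\tau/N)$ of $\|R_1\|$. You bound it by the linear estimate $N-D_n(\tau/N)\le N\tau$ before squaring, whereas the paper keeps $N-D_n(\tau/N)$ symbolic, expands the square in step~iii), and then applies the cruder pair of bounds $N-D_n(\tau/N)\le N$ (to one factor of the squared term) and $\tau\le 1$ (to the cross term) --- both routes produce the identical six summands of $\tilde C(\rho,M)$, and your handling of the maximum in \eqref{eq:pairwSchurdecomp} matches the paper's reference to \eqref{eq:2collSchurMax}. One minor caveat: the linear bound $N-D_n(\tau/N)\le N\tau$ is \emph{not} contained in Lemma~\ref{la:dirichbounds} as you claim (that lemma gives only quadratic bounds), but your sketched justification via $\sin(\pi\tau)\ge\pi\tau(1-\tau)$ on $[0,1]$ is correct and suffices, so this is a citation slip rather than a mathematical gap.
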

\begin{proof}
	The proof is analogous to that of Lemma \ref{la:pairwiseNormKInv}, the only difference is in step iv).
	Setting $c=1$ in ii) and iii), expanding the squared bracket in iii) and inserting this into \eqref{eq:i)} leads to
	\begin{align*}
		&\norm{(K_2 - BK_1^{-1}B^*)^{-1}}
		\le \bigg[ 2\left(N-\dirich{\tau/N}\right)\\
		&-N\tau^2 \left(\frac{\pi^2(\log\lfloor\frac{M}{4}\rfloor+1)}{\rho}+\frac{\pi^3}{3\rho^2}+\frac{2.42}{\rho^3}\right)
		- \frac{\rho}{N(\rho-1)} \left(N-\dirich{\tau/N}\right)^2	\\
		&- \frac{\rho}{\rho-1} 2\tau\left(N-\dirich{\tau/N}\right) \left(\frac{\pi(\log\lfloor\frac{M}{4}\rfloor+1)}{\rho}+\frac{\pi^2}{6\rho^2}\right)\\
		&- N\tau^2\frac{\rho}{\rho-1} \left(\frac{\pi^2(\log\lfloor\frac{M}{4}\rfloor+1)^2}{\rho^2} +\frac{\pi^3(\log\lfloor\frac{M}{4}\rfloor+1)}{3\rho^3}+ \frac{\pi^4}{36\rho^4}\right) \bigg]^{-1}.
	\end{align*}
	In three summands, we can factor out $N-\dirich{\tau/N}$ and use the estimate $N-\dirich{\tau/N}\ge N\tau^2$. Additionally, in the third summand we use the rough bound $N-\dirich{\tau/N}\le N$ and in the fourth $\tau\le1$. The same argument as in \eqref{eq:2collSchurMax} shows that this also bounds the maximum in \eqref{eq:pairwSchurdecomp} and we get the result.
\end{proof}

\begin{thm}[Upper bound]\label{thm:pairwUpc1}
	Under the conditions of Definition \ref{def:pairw} with $c=1$, $\rho \ge \rho_{\min} = 25 (\log\lfloor\frac{M}{4} \rfloor+1)$, we have
	\begin{equation*}
		\cond(A) < \frac{5}{\tau}.
	\end{equation*}
\end{thm}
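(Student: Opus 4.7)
The plan is to mirror the structure of Theorem \ref{thm:pairwUp}: I will combine the upper bound $\|K\| \le 2N(\rho+1)/\rho$ from Lemma \ref{la:pairwiseNormK} with the upper bound $\|K^{-1}\| \le C(\rho,M)/(N\tau^2)$ from the sharpened Lemma \ref{la:pairwiseNormKInvc1}, and then reduce $C(\rho,M)$ to its worst case under the hypothesis $\rho \ge 25(\log\lfloor M/4\rfloor+1)$. Since $M \ge 4$ gives $L := \log\lfloor M/4\rfloor+1 \ge 1$ and hence $\rho \ge 25$, the first estimate immediately yields $\|K\| \le 2N\cdot 26/25$.

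For the inverse-norm estimate I will argue monotonicity in two stages, in analogy with the proof of Theorem \ref{thm:pairwUp}. Every subtracted term in the definition of $\tilde C(\rho,M)$ is a non-negative power of $L$ divided by a product of positive powers of $\rho$ and $\rho-1$, so $\tilde C$ is monotone increasing in $\rho$ for fixed $M$; together with the obvious monotonicity of the prefactor $2\rho/(\rho-1)+\sqrt{(\rho+1)/(\rho-1)}$, this makes $C(\rho,M)$ monotone decreasing in $\rho$. It therefore suffices to substitute the lower bound $\rho = 25L$ and bound $C(25L,M)$ uniformly in $L \ge 1$. After this substitution, each of the ten subtracted terms in $\tilde C(25L,M)$ is either independent of $L$ (the single term $\pi^2 L/\rho = \pi^2/25$) or manifestly decreasing in $L$; for instance $2\pi L/(\rho-1) = 2\pi L/(25L-1)$ and $\pi^2 L^2/(\rho(\rho-1)) = \pi^2 L/(25(25L-1))$ are both decreasing. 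Hence the worst case is $L=1$, i.e., $M=4$ and $\rho=25$.

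A direct numerical evaluation at this worst case yields $\tilde C(25,4) > 0.26$, whence
\begin{equation*}
C(25,4) = \frac{2\cdot 25/24+\sqrt{26/24}}{\tilde C(25,4)} < 12.02,
\end{equation*}
and combining the two bounds gives
\begin{equation*}
\cond(A)^2 \le \|K\|\,\|K^{-1}\| \le \frac{52}{25}\cdot C(25,4) < 25,
\end{equation*}
from which the claim $\cond(A) < 5/\tau$ follows.

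The main obstacle is bookkeeping rather than any conceptual difficulty: the constant $\tilde C$ contains ten subtracted terms whose sum must be controlled uniformly in $(M,\rho)$. The simplifying point is that, after coupling $\rho$ and $L$ via the substitution $\rho = 25L$, every such term becomes non-increasing in $L$, so the verification collapses to a single numerical evaluation at $(M,\rho) = (4,25)$, exactly paralleling the argument used for Theorem \ref{thm:pairwUp}.
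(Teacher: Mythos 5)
Your approach is the same as the paper's: combine the $\|K\|$ bound from Lemma \ref{la:pairwiseNormK} with the $\|K^{-1}\|$ bound from Lemma \ref{la:pairwiseNormKInvc1}, use monotonicity (first in $\rho$ for fixed $M$, then along the coupling $\rho = 25L$ as $L$ varies) to reduce to the single worst case $(\rho,M)=(25,4)$, and finish by numerical evaluation. The two-stage monotonicity argument is correct (the term $\rho/(\rho-1)$ is not literally of the form ``power of $L$ over powers of $\rho,\rho-1$'' as you phrase it, but $-\rho/(\rho-1)$ is indeed increasing in $\rho$, so the conclusion stands), and after substituting $\rho = 25L$ each subtracted term is non-increasing in $L$, as you verify.

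There is one small but real arithmetic slip at the very end: you state $C(25,4) < 12.02$ and then conclude $\frac{52}{25}\cdot C(25,4) < 25$, but $\frac{52}{25}\cdot 12.02 = 25.0016 > 25$, so the stated chain of inequalities does not close. The fix is trivial: $\tilde C(25,4) \approx 0.2624$ gives $C(25,4) \approx 11.9 < 12$ (the paper's constant), and $\frac{52}{25}\cdot 12 = 24.96 < 25$ does the job. You should tighten the intermediate bound to $C(25,4) \le 12$ rather than rounding it up to $12.02$.
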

\begin{proof}
	Direct inspection gives monotonicity of $C(\rho,M)$ with respect to $\rho$ and also the estimate
	 $C(25 (\log\lfloor {M}/{4} \rfloor+1),M)\le C(25,4) \le 12$.
	Hence $\norm{K^{-1}}\le {12} N^{-1} \tau^{-2}$ and together with the bound $\norm{K}\le 52N/25$ from Lemma \ref{la:pairwiseNormK} we obtain the result.
\end{proof}
 
\begin{remark}
 Due to Lemma \ref{interlacing}, the upper bound from Theorem \ref{thm:pairwUp} remains valid if nodes are removed.
 Moreover, note that $\smin$ and $\smax$ are monotone increasing with $N$ and thus, condition number estimates for an even number $N$ follow.
 
 Lower and upper bounds in Theorems \ref{thm:condlowerbound} and \ref{thm:pairwUp} yield
 \begin{equation*}
  \frac{1}{\tau}\le\cond(A)\le \frac{5}{\tau},
 \end{equation*}
 and we believe that the lower bound is quite close to what actually happens.
 The absolute constant $5$ in the upper bound follows from our proof and can be compared to the more general results from \cite{BaDeGoYo18,LiLi17}.
 Their constants grow with the total number of nodes, in \cite[Cor.~1.1]{BaDeGoYo18} quite strongly like $C M^M$ and in \cite[Thm.~1 ineq.~(2.3), Thm~2 ineq.~(2.7), and ineq.~(2.8)]{LiLi17} still like $C\sqrt{M}$, which seems artificial.
 Moreover, we would like to discuss the a-priori conditions on the parameter $N$, $\rho$, and $\tau$.
 \begin{enumerate}
 \item Our condition
  \begin{equation*}
    \rho\ge 10c^2(\log\floor{{M}/{4}}+1)
  \end{equation*}
 implies that for specific configurations of $M$ nodes, our result becomes effective as early as $N\approx CM\log M$ - this is in contrast to the results \cite{BaDeGoYo18,LiLi17}, where $N$ and $\rho$ has to be much larger.
 In \cite[Cor.~1.1, left ineq.~(10)]{BaDeGoYo18}, the conditions $N\ge 4M^3$ and $\rho\ge 2M$ are imposed.
 In \cite[Thm.~1, ineq.~(2.2), Thm~2, ineq.~(2.5)]{LiLi17} and with minor simplifications, the conditions $N\ge M^2$ and
 \begin{align}\label{eq:tauLiLiao}
  \rho\ge \begin{cases}
           C_1 \left(\frac{M}{\tau}\right)^{1/4}, & \text{with}\; C_1\approx 42,\\
           C_2 \left(\frac{M}{\tau}\right)^{1/2}, & \text{with}\; C_2\approx 63,
          \end{cases}
 \end{align}
 are imposed.
\item Our condition
 \begin{equation*}
   \tau\le \frac{1}{4c^2}
 \end{equation*}
 can be compared to the weaker condition, cf.~\cite[Cor.~1.1, right ineq.~(10)]{BaDeGoYo18},
 \begin{equation*}
   \tau\le \frac{M}{2c}.
 \end{equation*}
 Both conditions artificially involve the uniformity constant $c$.
 Except for the special cases in Theorems \ref{thm:2collup} and \ref{thm:pairwUpc1}, this prevents us from letting $\tau\to 1$.
 In contrast, \cite[Thm.~1, Thm.~2]{LiLi17} places no upper bound except $\tau\le 1$.
 However, note that condition \eqref{eq:tauLiLiao} is an a-priori lower bound on $\tau$ which prevents $\tau\to 0$ already for moderate fixed $M$, but this limitation becomes weaker for larger $N$.
 \end{enumerate}
\end{remark}

\begin{remark}(Recent other techniques)
 The manuscript \cite{Di19} considers pairs of nearly-colliding nodes and seems to weaken the assumptions considerably and might even give stronger bounds on the smallest singular value.
 The taken approach differs completely from ours and the ones in \cite{BaDeGoYo18,LiLi17}, but rather generalizes the construction of \cite{Mo15} to pairs of nearly-colliding nodes and subsets of them. 
 
 As mentioned in the introduction, the second version of \cite{LiLi17} provides a quite general framework, which can also be applied to our setting.
 The developed robust duality approach links the smallest singular value to the norm of an almost interpolating trigonometric polynomial and the interpolation error.
 The established lower bound on the smallest singular value and the condition on the separation $\rho$ depend on the number of nodes $M$ and the node separation $\tau$, such that for a fixed node set, pair nodes cannot become arbitrarily close to each other.
 Besides our generalization to the multivariate case in \cite{KuNa19}, the mentioned dependence on $M$ was dropped and the dependence on $\tau$ could be weakened considerably.
 
 In \cite{BaDiGoYo19} a QR-decomposition technique is used to establish bounds on \emph{all} singular values of Vandermonde matrices with nearly-colliding nodes.
 Adapted to the case of pairs of nearly-colliding nodes, we obtain the following.
 Let $t_1<t_2 \ll t_3<t_4$, $N=2n+1$, and $\nu:=N(t_3-t_2)$, and partition $A^*=(A_1^*\; A_2^*)$ with QR-decompositions $A_1^*=Q_1 R_1$ and $A_2^*=Q_2 R_2$, then direct computations (avoiding a so-called limit basis) yield
 \begin{equation*}
  \|Q_1^* Q_2\|_{\mathrm{F}}\le \frac{116}{\nu}.
 \end{equation*}
 Now let $M\ge 4$ even and $A$ as in Definition \ref{def:pairw}. With respect to the nearly-colliding pairs, partition $A^*=(A_1^*\; A_2^*\;\hdots\; A^*_{M/2})$ with QR-decompositions $A_j^*=Q_j R_j$ and $Q:=(Q_1\; Q_2\;\hdots\; Q_{M/2})$, and let $\rho\ge \frac{27}{23}\cdot232(\log\floor{\frac{M}{4}}+1)$, then Lemma \ref{blockGersch} yields
 \begin{equation*}
  \left|1-\lambda_{r}(Q^*Q)\right| \le \max_j \sum_{\ell=1}^{M/2} \|Q_j^* Q_{\ell}\|_{\mathrm{F}} \le 2 \sum_{\ell=1}^{\floor{M/4}} \frac{116}{\ell\rho} \le \frac{232(\log\floor{\frac{M}{4}}+1)}{\rho} \le \frac{23}{27},\quad r=1,\dots,M.
 \end{equation*}
 The Courant-Fisher min-max theorem \cite[Thm.~4.2.6]{HoJo13} and Weyl's perturbation theorem \cite[Thm.~4.3.1]{HoJo13} yield
 \begin{equation*}
  \cond(A)\le \cond(Q) \cdot \max_j\cond(A_j) \le \frac{5}{\tau}.
 \end{equation*}
 Directly following \cite{BaDiGoYo19} for the case of pairwise nearly-colliding nodes yields $\abs{(Q^* Q)_{j,k}}\le {150}/{\rho}+1079\tau$, for all $j\ne k$. Due to the uniformity of that bound and the additional summand in $\tau$, the parameter $\rho$ needs to grow at least linear in $M$.
\end{remark}

\section{Numerical Examples}

All computations were carried out using MATLAB R2019b.
As a test for the bounds in the case of one pair of nearly-colliding nodes, we use the following configuration.
Let the number of nodes $M=20$ and $M=200$ be fixed, respectively.
Moreover,  we choose $N=1+12(M-1)$ which ensures that all nodes fit on the unit interval.
We choose $\tau \in \interc{10^{-11},1}$ logarithmically uniformly at random and $\rho_3,\dots,\rho_M \in \interc{6,12}$ uniformly at random.
Then, we set the nodes $t_1<\cdots<t_M \in \intercl{0,1}$ such that $t_1=0$, $t_2=\tau/N$ and for $j=3,\dots,M$, $\abs{t_j-t_{j-1}}=\rho_j/N$.
Afterwards, the condition number of the corresponding Vandermonde matrix is computed.
This procedure is repeated $100$ times and the results are presented in Figure \ref{fig:coll_numerics} (left).

\begin{figure}[h]
	\begin{subfigure}{0.49\textwidth}
		\includegraphics[width=0.95\linewidth]{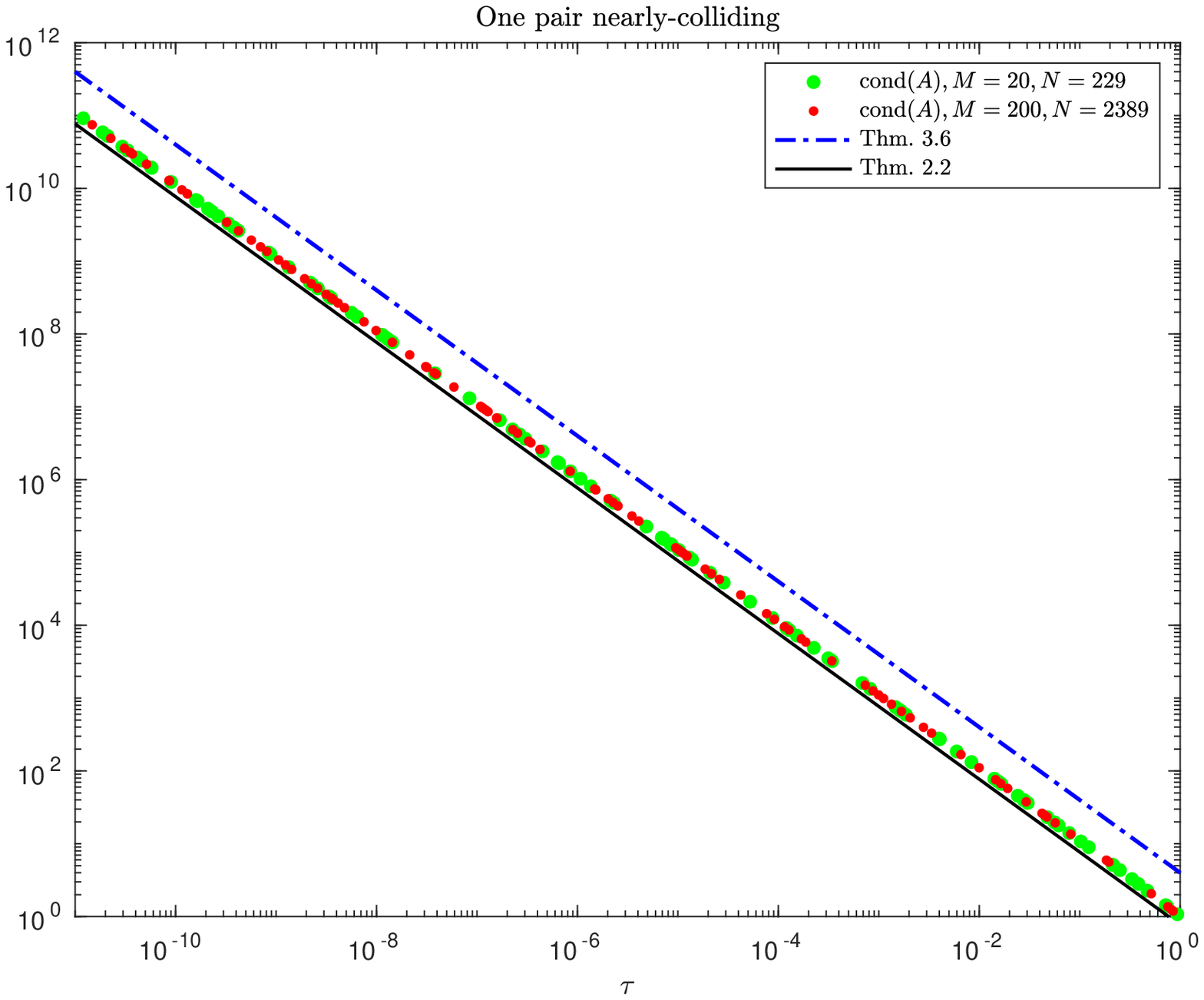}
	\end{subfigure}
	\begin{subfigure}{0.49\textwidth}
		\includegraphics[width=0.95\linewidth]{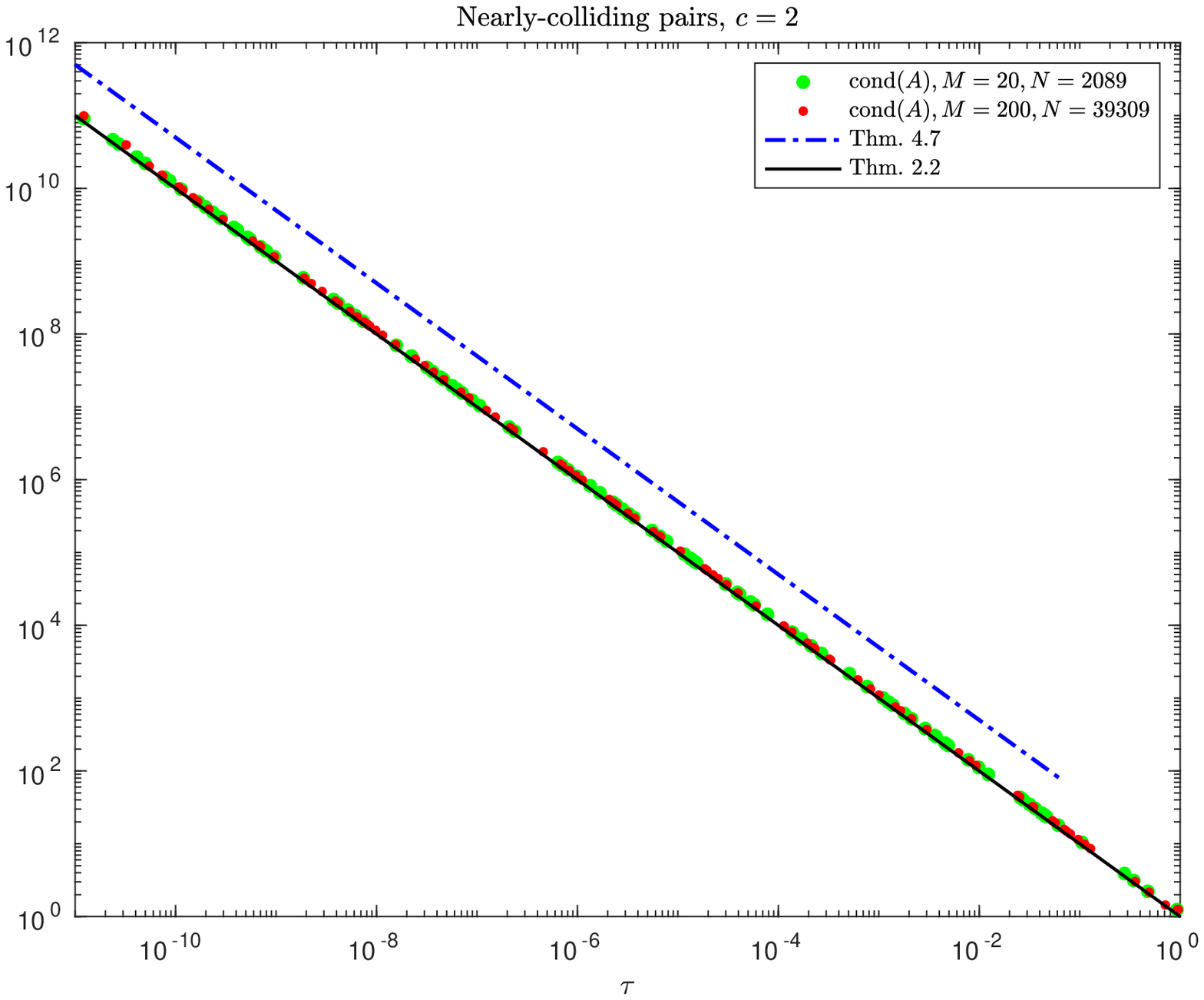}
	\end{subfigure}
	\caption{Numerical experiments for bounds on the condition number, lower bounds from Th.~\ref{thm:condlowerbound}; Left: One nearly-colliding pair, upper bound from Thm.~\ref{thm:2collup}; Right: Pairs of nearly-colliding nodes, upper bound from Thm.~\ref{thm:pairwUp}.}
	\label{fig:coll_numerics}
\end{figure}
For pairs of nearly-colliding nodes, we use the following configuration.
Let the number of nodes $M=20$ and $M=200$ be fixed, respectively.
Moreover, we choose the parameter $c=2$ and $\tau_{max}$ and $\rho_{\min}$ as in Theorem \ref{thm:pairwUp}.
To ensure that all nodes fit on the unit interval, we choose $N$ as the smallest odd integer bigger than $(c\tau_{max}+2\rho_{\min})M/2$.
Then, we choose $\tau \in \interc{10^{-11},1}$ logarithmically uniformly at random and set the nodes $t_1< \cdots < t_M \in \intercl{0,1}$ such that $t_1=0$, $t_2=\tau/N$ and for $j=3,\dots,M$, $\abs{t_j-t_{j-1}}=\rho_j/N$ if $j$ is odd or $\abs{t_j-t_{j-1}}=\tau_j/N$ if $j$ is even,
where $\tau_j\in \interc{\tau,c\tau}$ and $\rho_j \in \interc{\rho_{\min},2\rho_{\min}}$ are picked uniformly at random, respectively.
Afterwards, the condition number of the corresponding Vandermonde matrix is computed. This procedure is repeated $100$ times and the results are presented in Figure \ref{fig:coll_numerics} (right).
Note that Theorem \ref{thm:pairwUp} makes the restriction $\tau \le \tau_{max}=\frac{1}{4}$, which seems to be an artifact of our proof technique.

\begin{figure}[h]
	\begin{subfigure}{0.49\textwidth}
		\includegraphics[width=0.95\linewidth]{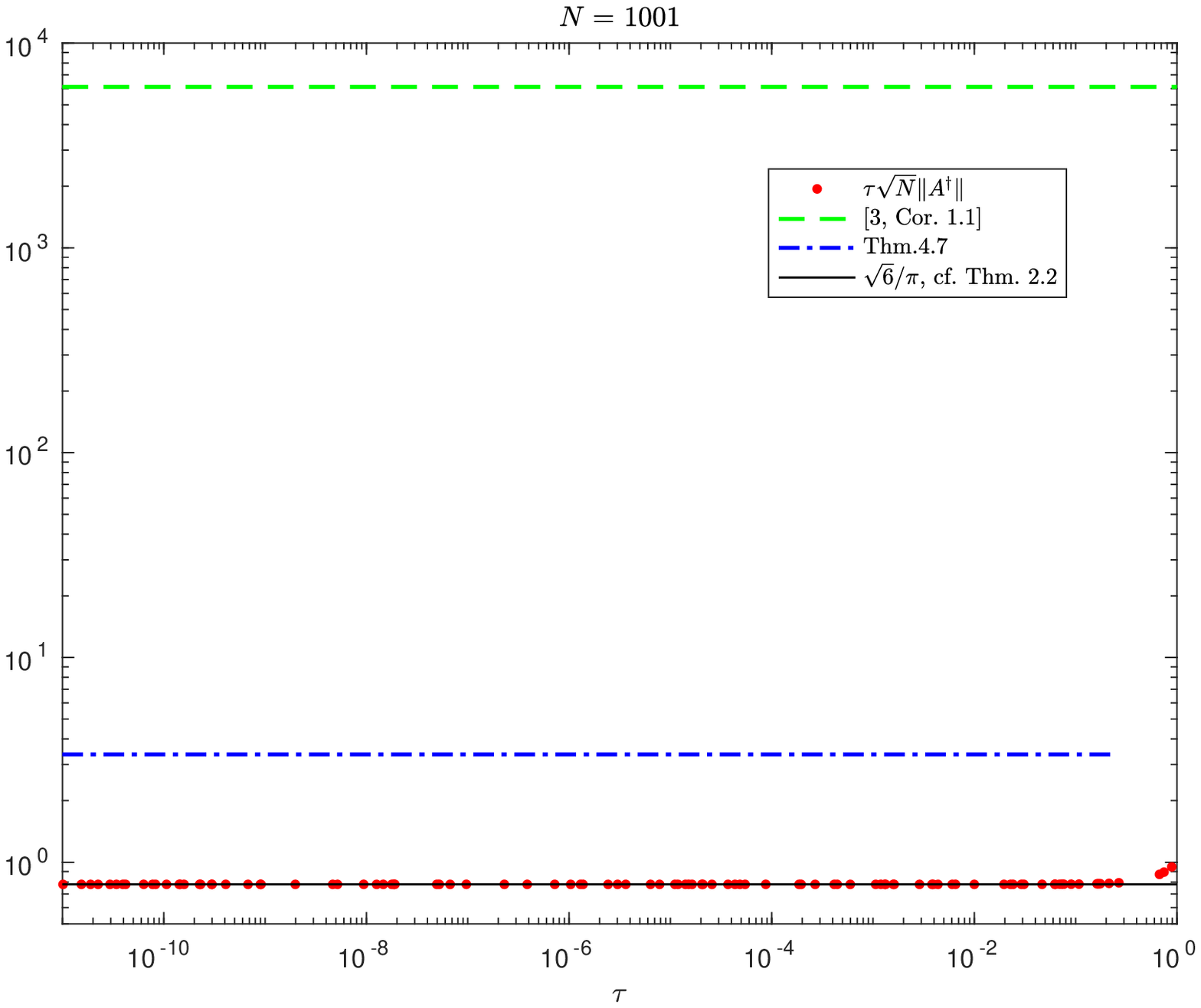}
	\end{subfigure}
	\begin{subfigure}{0.49\textwidth}
		\includegraphics[width=0.95\linewidth]{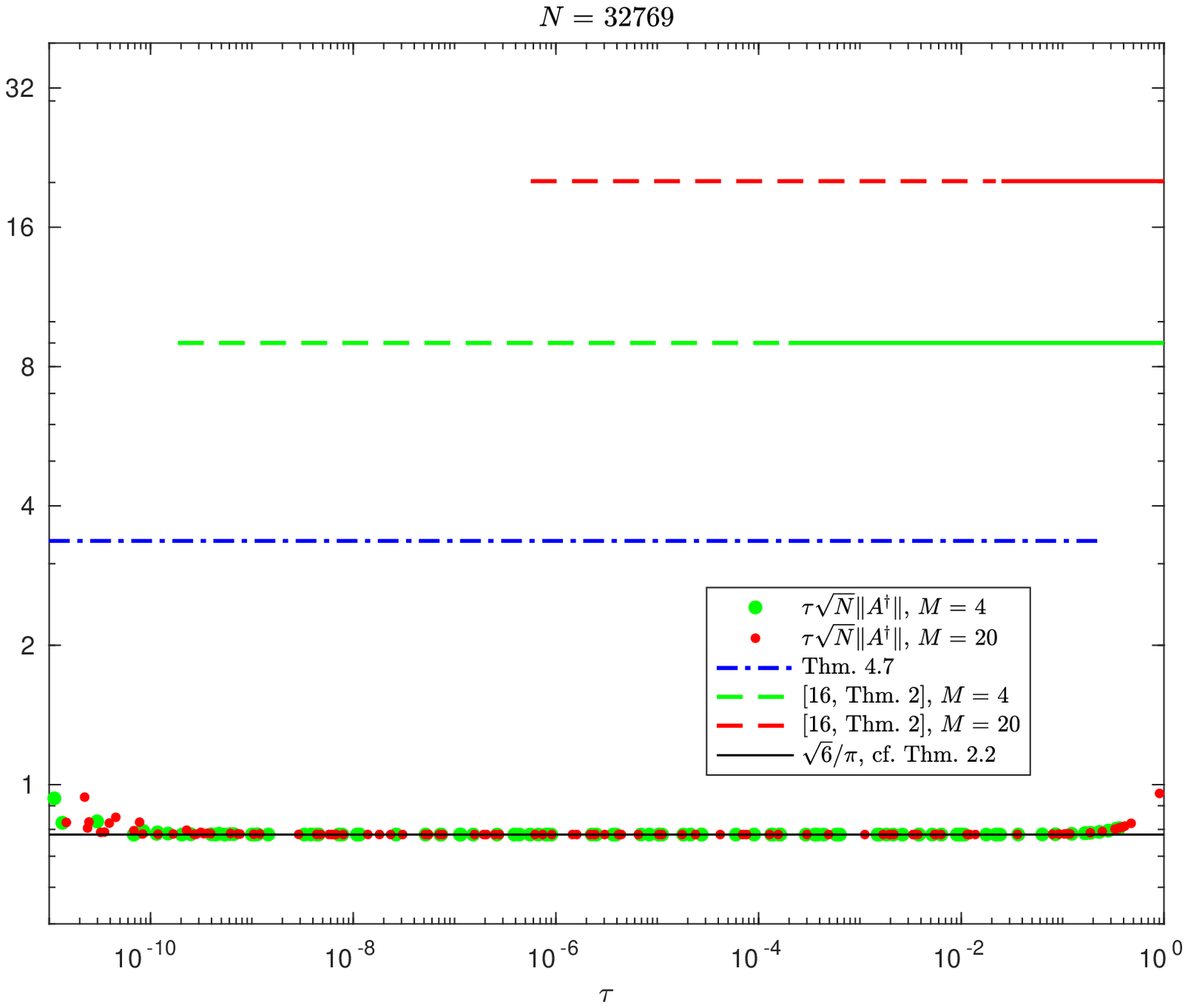}
	\end{subfigure}
	\caption{Upper bounds for $\norm{A^{\dagger}}$; Left: Comparison of Thm.~\ref{thm:pairwUp} with \cite[Cor. 1.1]{BaDeGoYo18}; Right: Comparison of Thm.~\ref{thm:pairwUp} with \cite[Thm.~2]{LiLi17}.}
	\label{fig:compare_numerics} 
\end{figure}

In order to compare Theorem \ref{thm:pairwUp} with the results from \cite[Cor.~1.1]{BaDeGoYo18}, we need to satisfy the assumptions of both results.
We take $M=3$ nodes with two nodes nearly-colliding, i.e. $t_1=0$, $t_1=\tau/N$ and $t_2=t_1+\rho/N$.
The assumptions in \cite[Cor.~1.1]{BaDeGoYo18} make it necessary that the nodes lie on an interval of length $\frac{1}{2M^2}=\frac{1}{18}$.
We choose the parameter $c=1$, $\rho_{\min} =12$, and $N=1001$.
Then, we pick $\tau \in \interc{10^{-11},1}$ logarithmically uniformly at random and $\rho \in \interc{\rho_{\min},\frac{N}{2M^2}-\tau}$ uniformly at random.
Afterwards the inverse of the smallest singular value (norm of Moore-Penrose pseudo inverse) of the corresponding Vandermonde matrix is computed.
This procedure is repeated $100$ times and the results normalized by $\tau\sqrt{N}$ are presented in Figure \ref{fig:compare_numerics} (left).
From \cite[Cor.~1.1]{BaDeGoYo18}, we get
\begin{equation*}
 \norm{A^{\dagger}} \le \frac{2(2\pi)^{M-1} M^{2M-1}}{\pi}\cdot \frac{N\sqrt{N}}{(N-1)\sqrt{N-1}}\cdot\frac{1}{\tau\sqrt{N}} \approx 6116 \cdot \frac{1}{\tau\sqrt{N}} 
\end{equation*}
for $\tau\le 1$, whereas Theorem \ref{thm:pairwUp} provides $\norm{A^{\dagger}} \le \sqrt{11.3}\cdot \frac{1}{\tau\sqrt{N}} \approx 3.4 \cdot \frac{1}{\tau\sqrt{N}}$ for $\tau \le \frac{1}{4}$.

In order to compare our results with the ones from the second version of \cite[Thm.~1, Thm.~2]{LiLi17}, we set the parameter $N=2^{15}+1$, $c=1$ and $M=4$ and $M=20$, respectively. 
All pairs of nodes are placed uniformly, such that $t_j=\frac{2j-2}{M}$ and $t_{j+M/2}=t_j+\frac{\tau}{N}$ for $j=1,\hdots,\frac{M}{2}$, where $\tau$ is picked logarithmically uniformly at random from $[10^{-11},1]$.
Afterwards, the inverse of the smallest singular value (norm of Moore-Penrose pseudo inverse) of the corresponding Vandermonde matrix is computed.
This procedure is repeated $100$ times and the results normalized by $\tau\sqrt{N}$ are presented in Figure \ref{fig:compare_numerics} (right).
Note that \cite[Thm.~1, ineq.~(2.2), Thm.~2, ineq.~(2.5)]{LiLi17} restricts \setlength{\columnsep}{-1cm}\vspace{-0.8cm}
\begin{multicols}{2}
\begin{equation*}
	\tau \ge \frac{20^2 M 2^{5} N^3}{\rho^2 (N-1)^3} \approx 
		\begin{cases}
			1.9\cdot 10^{-4},\\
			2.4 \cdot 10^{-2},
		\end{cases}
\end{equation*}
\break
\begin{equation*}
	\tau \ge \frac{10^4 2^{10} M  N^5}{\rho^4 \pi(N-1)^5} \approx 
		\begin{cases}
			1.8\cdot 10^{-10},\quad &M=4,\\
			5.6 \cdot 10^{-7},\quad &M=20,
		\end{cases}
\end{equation*}
\end{multicols}
\noindent respectively, where we used the uniform bound $\rho < \frac{2N}{M}$.
The results are shown in Figure \ref{fig:compare_numerics} (right) by proper lines \cite[Thm.~2, ineq.~(2.5)]{LiLi17} and by broken lines \cite[Thm.~1, ineq.~(2.2)]{LiLi17}.
In both cases and with minor corrections, the resulting estimate is
\begin{align*}
 \norm{A^{\dagger}}
 &\le \frac{20\sqrt{2}}{19} \left(1-\frac{\pi^2}{12}\right)^{-1/2} \frac{N-1}{2}\floor{\frac{N-1}{2}}^{-1}\frac{4}{\pi} \sqrt{M} \frac{\sqrt{N}}{\sqrt{N-1}}\cdot \frac{1}{\tau\sqrt{N}}\\
 &\approx
 \begin{cases}
   9 \cdot \frac{1}{\tau\sqrt{N}},\quad & M=4,\\
   20.1 \cdot \frac{1}{\tau\sqrt{N}},\quad &M=20,
 \end{cases}
\end{align*}
whereas Theorem \ref{thm:pairwUp} provides again $\norm{A^{\dagger}} \le 3.4 \cdot \frac{1}{\tau\sqrt{N}}$ for $\tau \le \frac{1}{4}$.
We note that our bound remains valid for $c>1$ but the restriction on $\tau$ becomes more severe.

\section{Summary}
We proved upper and lower bounds for the spectral condition number of rectangular Vandermonde matrices with nodes on the complex unit circle.
If pairs of nodes nearly-collide, the studied condition number grows linearly with the inverse separation distance.
In contrast to the more general results \cite{BaDeGoYo18,LiLi17}, we provide reasonable sharp and absolute constants but have to admit that our technique most likely will not generalize to more than two nodes nearly-colliding.
Note that our easy to achieve lower bound seems to capture the situation more accurately than the upper bound.
We posed mild technical conditions in our proofs, which cannot be confirmed to be necessary numerically.
While \cite{BaDeGoYo18} provided the right growth order for the first time, some of the imposed conditions are very restrictive and the involved constants are quite pessimistic.
The second version of \cite{LiLi17} provided a quite general framework and presented decent results with only a mild artificial growth of the condition number with respect to the number of nodes.
Moreover, a technical condition there prevents the separation distance from going to zero for a fixed number of nodes and a fixed bandwidth.
We believe that both problems can be fixed at least partially and thus \cite{LiLi17} seems to be a good framework for understanding clustered node configurations.
Recently, the manuscript \cite{Di19} came to our attention - it considers pairs of nearly-colliding nodes and seems to weaken the assumptions considerably and might even give stronger bounds on the smallest singular value.
The taken approach differs completely from ours and the ones in \cite{BaDeGoYo18,LiLi17}, but rather generalizes the construction of \cite{Mo15} to pairs of nearly-colliding nodes.

\section*{Acknowledgements}
The authors gratefully acknowledge support by the projects DFG-GK1916 and DFG-SFB944.

\bibliographystyle{abbrv}
\bibliography{references}

\goodbreak
\appendix
\section{Appendix}  
The following technical results are used within the proofs of our main results.
\begin{lemma}\label{la:dirichbounds}
	Let $n\in\N$, $N=2n+1$, then the Dirichlet kernel \eqref{eq:Dn} is bounded by
	\begin{equation*}
	N- \frac{\pi^2}{6}N^3t^2\le D_n(t)\le N - N^3 t^2,\qquad 0\le|t|\le\frac{1}{N}.
	\end{equation*}
	Furthermore, the Dirichlet kernel and its first two derivatives are bounded by
	\begin{align*}
		\abs{\dirich{t}} &\le \frac{1}{2\abs{t}},\\
		\big|\dirichd{t}\big| &\le N^2 \left(\frac{\pi}{2N|t|}+\frac{1}{2N^2|t|^2}\right),\\
		\big| \dirichdd{t}\big| &\le N^3\left(\frac{\pi^2}{2N|t|}+\frac{\pi}{N^2|t|^2}+\frac{1}{N^3|t|^3}\right)
	\end{align*}
	for $0<|t|\le 1/2$.
\end{lemma}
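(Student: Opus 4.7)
The plan is to treat the lemma's two halves by different techniques: for the Taylor-style sandwich on $[-1/N,1/N]$ I shall use the product factorization of $\sin((2n+1)\pi t)$, and for the decay estimates on $0<|t|\le 1/2$ I shall work directly from the quotient form $D_n(t)=\sin(N\pi t)/\sin(\pi t)$ combined with the standard inequality $|\sin(\pi t)|\ge 2|t|$.

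For the first pair of bounds I rely on the classical identity
\begin{equation*}
  D_n(t) \;=\; N\prod_{k=1}^{n}\left(1-\frac{\sin^2(\pi t)}{\sin^2(k\pi/N)}\right),
\end{equation*}
which follows from the product formula for $\sin((2n+1)\pi t)$ with odd degree. On the interval $|t|\le 1/N$ every factor lies in $[0,1]$. The upper bound $D_n(t)\le N-N^3 t^2$ comes from retaining only the $k=1$ factor (the others are at most $1$) and invoking the fact that $x\mapsto\sin(\pi x)/x$ is decreasing on $(0,1/N]$, which gives $\sin(\pi t)\ge Nt\sin(\pi/N)$ and hence $1-\sin^2(\pi t)/\sin^2(\pi/N)\le 1-N^2 t^2$. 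The lower bound follows from the Weierstrass product inequality $\prod(1-a_k)\ge 1-\sum a_k$ applied to $a_k=\sin^2(\pi t)/\sin^2(k\pi/N)$, the classical identity $\sum_{k=1}^{n}\sin^{-2}(k\pi/N)=(N^2-1)/6$, and the estimate $\sin^2(\pi t)\le\pi^2 t^2$; these together actually give the slightly sharper inequality $D_n(t)\ge N-\pi^2 N(N^2-1)t^2/6$, which implies the stated one.

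For the three decay bounds I use $|\sin(\pi t)|\ge 2|t|$ on $|t|\le 1/2$. The bound on $|D_n|$ is then immediate from $|\sin(N\pi t)|\le 1$. Differentiating the quotient yields
\begin{equation*}
  D_n'(t) \;=\; \pi\,\frac{N\cos(N\pi t)\sin(\pi t)-\cos(\pi t)\sin(N\pi t)}{\sin^2(\pi t)},
\end{equation*}
whose numerator I split by the triangle inequality: the first summand contributes $\pi N/(2|t|)$ via $N|\sin(\pi t)|/\sin^2(\pi t)\le N/(2|t|)$, and the second a $1/t^2$-term via $|\sin(N\pi t)|/\sin^2(\pi t)\le 1/(4t^2)$. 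A second differentiation, substantially simplified by the cancellation that makes the derivative of the above numerator equal to $-(N^2-1)\pi\sin(N\pi t)\sin(\pi t)$, produces a three-summand expression for $D_n''(t)$ each of whose pieces, estimated in the same spirit (using $|\sin(\pi t)|\ge 2|t|$ appropriately often in the denominator), yields exactly one of the three summands in the claimed bound.

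The main obstacle I anticipate is matching the exact numerical constants in the bounds on $|D_n'|$ and $|D_n''|$: a pure triangle-inequality estimate on the second term of $D_n'$ delivers the coefficient $\pi/4$ instead of the claimed $1/2$. Obtaining the sharper constants will require either the product-to-sum rewrite $\cos(\pi t)\sin(N\pi t)=\frac{1}{2}(\sin((N+1)\pi t)+\sin((N-1)\pi t))$, or exploitation of $|\sin(N\pi t)|/|\sin(\pi t)|=|D_n(t)|\le N$ to absorb a factor of $N$ where convenient; the same delicate bookkeeping is needed for the $|D_n''|$ bound. By contrast, the first part is essentially a computation once the two classical identities are in hand, and I foresee no conceptual difficulty there.
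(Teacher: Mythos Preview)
Your treatment of the first pair of inequalities is correct and follows a genuinely different route from the paper. The paper argues with elementary one-variable inequalities: the lower bound comes from $x-x^3/6\le\sin x\le x$ applied to numerator and denominator of $\sin(N\pi t)/\sin(\pi t)$, while the upper bound comes from integrating $\cos(\alpha x)\le\cos x$ to obtain $\sin(N\pi t/2)\le (N/2)\sin(\pi t)$, hence $D_n(t)\le N\cos(N\pi t/2)$, followed by $\cos x\le 1-4x^2/\pi^2$. Your product-factorization approach is more structural: it explains \emph{why} the $k=1$ factor carries the dominant quadratic behaviour, and the Weierstrass step together with the exact sum $\sum_{k=1}^{n}\csc^2(k\pi/N)=(N^2-1)/6$ even yields the slightly sharper constant $N(N^2-1)$ in place of $N^3$. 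The paper's argument, in exchange, needs no nontrivial identities.

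For the decay estimates your overall strategy matches the paper's, but your two proposed fixes for the constant do not work as written. Product-to-sum on $\cos(\pi t)\sin(N\pi t)$ still leaves you with $\pi/(4t^2)$, and invoking $|D_n|\le N$ turns the second summand into one of order $N/|t|$, not $1/t^2$, which does not imply the claimed bound for $|t|>1/(\pi N)$. The paper's device is the one you almost wrote but with the other bound on $D_n$: factor the troublesome term as $\pi\cot(\pi t)\,D_n(t)$ and combine $|\cot x|\le 1/|x|$ on $(0,\pi/2]$ with the \emph{just-established} decay estimate $|D_n(t)|\le 1/(2|t|)$ (not the crude $\le N$). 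This gives $\pi\cdot\frac{1}{\pi|t|}\cdot\frac{1}{2|t|}=\frac{1}{2t^2}$ on the nose. The same recursive trick then handles the second derivative cleanly: your identity rewrites as
\[
D_n''(t)=-(N^2-1)\pi^2 D_n(t)-2\pi\cot(\pi t)\,D_n'(t),
\]
and feeding in $|\cot(\pi t)|\le 1/(\pi|t|)$ together with the bounds already obtained for $|D_n|$ and $|D_n'|$ produces precisely the three claimed summands.
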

\begin{proof}
 Due to symmetry, it suffices to prove all bounds for $t>0$ and we use the explicit expression of the Dirichlet kernel in \eqref{eq:Dn}.
	The lower bound on $\dirich{t}$ can be derived from the inequalities $x-x^3/6\le \sin(x) \le x$, that hold for all $x\in [0,\pi]$.
	The left inequality with $x=N\pi t$ and the right inequality with $x=\pi t$ lead to
	\begin{equation*}
		\sin(N\pi t) 
		\ge \left( N- \frac{\pi^2}{6} N^3 t^2\right) \pi t
		\ge \left( N- \frac{\pi^2}{6} N^3 t^2\right) \sin(\pi t).
	\end{equation*}
	The upper bound on $\dirich{t}$ can be derived from the inequality $\cos(\alpha x) \le \cos(x)$ that holds for all $x\in [0,\pi/2]$ and $\alpha>1$ such that $\alpha x\in [0,\pi/2]$.
	Integrating this inequality, choosing $\alpha = N/2$ and $x = \pi t$, and applying the double angle formula yields
	\begin{equation*}
		\frac{\sin(N\pi t)}{2\cos(\frac{N}{2}\pi t)}=\sin\left(\frac{N}{2}\pi t\right) \le \frac{N}{2}\sin(\pi t).
	\end{equation*}
	Reordering the inequality and applying that $\cos(x)\le 1-4 x^2 /\pi^2$ for all $x\in [0,\pi/2]$ yields 
	\begin{equation*}
		\frac{\sin(N\pi t)}{\sin(\pi t)} \le N\cos(\frac{N}{2} \pi t)
		\le N(1- N^2 t^2).
	\end{equation*}
	Finally, the remaining bounds on the absolute values can be proven by calculating the first and second derivatives and using $\sin(x)\ge 2x/\pi$ and $\cot{x}\le 1/x$ that hold for all $x\in(0,\pi/2]$.
\end{proof}

\begin{lemma}\label{la:normAbsMat}
	Let $M,\widetilde M\in\C^{m\times n}$ with $\abs{M_{k\ell}}\le \widetilde M_{k\ell}$ for all $k=1,\hdots,m$, $\ell=1,\hdots,n$, then
	\begin{equation*}
	\norm{M} \le \norm{\widetilde M}.
	\end{equation*}
\end{lemma}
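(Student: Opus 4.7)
The plan is to pass from the operator-norm definition to an entrywise bound by taking absolute values componentwise. I will use the characterization $\|M\| = \sup_{\|x\|=1}\|Mx\|$ with the Euclidean vector norm, and exploit the fact that replacing a vector by the vector of its moduli does not change its Euclidean norm.

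First, I would fix any $x\in\C^n$ and introduce the nonnegative real vector $\tilde x\in\R^n$ defined by $\tilde x_\ell := |x_\ell|$, so that $\|\tilde x\|=\|x\|$. Then for every row index $k$ the triangle inequality gives
\begin{equation*}
|(Mx)_k| = \Bigl|\sum_{\ell=1}^{n} M_{k\ell}\, x_\ell\Bigr| \le \sum_{\ell=1}^{n} |M_{k\ell}|\,|x_\ell| \le \sum_{\ell=1}^{n} \widetilde M_{k\ell}\, \tilde x_\ell = (\widetilde M \tilde x)_k,
\end{equation*}
where in the second inequality I use the pointwise assumption $|M_{k\ell}|\le \widetilde M_{k\ell}$ together with $\tilde x_\ell\ge 0$. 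Note that $(\widetilde M\tilde x)_k$ is itself nonnegative, so squaring preserves the inequality.

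Second, I would square and sum over $k$ to pass to Euclidean norms:
\begin{equation*}
\|Mx\|^2 = \sum_{k=1}^{m} |(Mx)_k|^2 \le \sum_{k=1}^{m} \bigl((\widetilde M\tilde x)_k\bigr)^2 = \|\widetilde M \tilde x\|^2 \le \|\widetilde M\|^2\,\|\tilde x\|^2 = \|\widetilde M\|^2\,\|x\|^2.
\end{equation*}
Taking the supremum over all $x$ with $\|x\|=1$ yields $\|M\|\le\|\widetilde M\|$.

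There is essentially no obstacle here; the only thing to be careful about is that $\widetilde M$ has nonnegative real entries and that $\tilde x$ has nonnegative real entries, which is exactly what makes the chain of inequalities legitimate (so that summing squares preserves the ordering). No further tool beyond the triangle inequality and the definition of the spectral norm is needed.
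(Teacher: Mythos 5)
Your proof is correct and follows essentially the same route as the paper: entrywise triangle inequality, pass to the vector of moduli, sum squares, and invoke the definition of the spectral norm. The only cosmetic difference is that you introduce the auxiliary vector $\tilde x$ and then apply $\norm{\widetilde M\tilde x}\le\norm{\widetilde M}\,\norm{\tilde x}$ explicitly, whereas the paper keeps the supremum throughout and notes that for the nonnegative matrix $\widetilde M$ the maximum over unit vectors is unchanged when restricting to nonnegative vectors.
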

\begin{proof}
We directly show the result by
	\begin{align*}
		\norm{M}^2=& 	\max_{\norm{x}=1} \norm{Mx}^2 
				= 	\max_{\norm{x}=1} \sum_{k=1}^{m} \abs{\sum_{\ell=1}^{n} M_{k\ell}x_\ell}^2 
				\le \max_{\norm{x}=1} \sum_{k=1}^{m} \left(\sum_{\ell=1}^{n} \abs{M_{k\ell}}\abs{x_\ell}\right)^2\\
				\le& \max_{\norm{x}=1} \sum_{k=1}^{m} \left(\sum_{\ell=1}^{n} \widetilde M_{k\ell}\abs{x_\ell}\right)^2 = \max_{\norm{x}=1} \sum_{k=1}^{m} \left(\sum_{\ell=1}^{n} \widetilde M_{k\ell}x_\ell\right)^2 = \norm{\widetilde M}^2.
	\end{align*}
	Note that similar estimates can be found for the Frobenius norm in \cite[p.~520]{HoJo13}.
\end{proof}

\begin{lemma}[{Neumann expansion}]\label{la:neumann}
	Let $M \in \C^{n\times n}$ Hermitian and positive definite. Let $\eta \in \R$ be a parameter satisfying $\eta > \norm{M}$, then
	\begin{equation*}
	\|M^{-1}\| \leq  \frac{1}{\eta - \norm{\eta I -M}}.
	\end{equation*}
\end{lemma}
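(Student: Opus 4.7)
The plan is to apply the standard Neumann series expansion after writing $M$ as a perturbation of $\eta I$. Specifically, I would start from the identity
\begin{equation*}
M = \eta I - (\eta I - M) = \eta\left(I - \tfrac{1}{\eta}(\eta I - M)\right),
\end{equation*}
and aim to invert the bracketed factor by a geometric series in $\frac{1}{\eta}(\eta I - M)$.

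The key preliminary step is to verify that $\|\frac{1}{\eta}(\eta I - M)\| < 1$, so that the Neumann series converges. Since $M$ is Hermitian and positive definite, its eigenvalues satisfy $0 < \lambda_{\min}(M) \le \lambda_{\max}(M) = \|M\| < \eta$. The matrix $\eta I - M$ is therefore Hermitian with eigenvalues $\eta - \lambda(M) \in (0,\eta)$, so $\|\eta I - M\| = \eta - \lambda_{\min}(M) < \eta$. This gives both the needed strict inequality and the fact that $\eta - \|\eta I - M\|$ in the claimed bound is strictly positive.

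With convergence secured, I would write
\begin{equation*}
M^{-1} = \frac{1}{\eta}\sum_{k=0}^{\infty} \left(\tfrac{1}{\eta}(\eta I - M)\right)^k,
\end{equation*}
apply the triangle inequality and submultiplicativity of the spectral norm, and sum the resulting geometric series to obtain
\begin{equation*}
\|M^{-1}\| \le \frac{1}{\eta}\sum_{k=0}^{\infty}\left(\tfrac{\|\eta I - M\|}{\eta}\right)^k = \frac{1}{\eta}\cdot\frac{1}{1-\|\eta I - M\|/\eta} = \frac{1}{\eta - \|\eta I - M\|},
\end{equation*}
which is precisely the claim.

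There is no real obstacle here; the only subtle point is ensuring that the denominator $\eta - \|\eta I - M\|$ is positive, and this follows immediately from the spectral observation above. The result actually holds with equality in the Hermitian positive definite case, since both sides reduce to $1/\lambda_{\min}(M)$, but the Neumann-series route only yields (and only needs) the inequality.
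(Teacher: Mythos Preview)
Your proof is correct and follows essentially the same route as the paper: write $M^{-1}=\eta^{-1}\sum_{k\ge 0}(I-\eta^{-1}M)^k$ via the Neumann series and bound the geometric sum. Your version is a bit more careful in verifying convergence and in noting that equality actually holds, but the argument is the same.
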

\begin{proof}
	Applying the Neumann series to the matrix $I-\eta^{-1}M$ yields
	\begin{align*}
	\norm{M^{-1}}	
	=   \frac{1}{\eta}\norm{\sum_{k=0}^{\infty}\left(I-\frac{1}{\eta}M\right)^k}
	\le \frac{1}{\eta}\frac{1}{1-\norm{I-\frac{1}{\eta}M}}
	= 	 \frac{1}{\eta-\norm{\eta I-M}}.
	\end{align*}
\end{proof}

\begin{lemma}[{Schur complement, cf.~\cite[eq.~(0.8.5.3)]{HoJo13}}]\label{la:schurDecomp}
	Let $n_1,n_2\in \N$ and the matrix $M$ $\in \C^{(n_1+n_2)\times (n_1+n_2)}$ be a $2\times 2$ block matrix of the form
	\begin{equation*}
	M=\pmat{M_1 & M_2 \\ M_3 & M_4}, M_1\in \C^{n_1\times n_1}, M_4 \in \C^{n_2\times n_2},
	\end{equation*}
	with $M_1$ being invertible. Then the Schur complement decomposition is given by
	\begin{equation*}
	M= \pmat{I_{n_1} & 0 \\ -M_3M_1^{-1} & I_{n_2}}^{-1} \pmat{M_1 & 0 \\ 0 & M_4-M_3M_1^{-1}M_2} \pmat{I_{n_1} & -M_1^{-1}M_2 \\ 0 & I_{n_2} }^{-1}.		
	\end{equation*}
	The block $[M/M_1]:=M_4-M_3M_1^{-1}M_2$ is called Schur complement of $M_1$ in $M$.
\end{lemma}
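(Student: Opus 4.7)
The plan is to verify the claimed factorization by direct computation, exploiting the fact that the outer triangular matrices are of elementary block type with explicit inverses. First I would observe that
\begin{equation*}
 \pmat{I_{n_1} & 0 \\ -M_3 M_1^{-1} & I_{n_2}}^{-1} = \pmat{I_{n_1} & 0 \\ M_3 M_1^{-1} & I_{n_2}}, \quad
 \pmat{I_{n_1} & -M_1^{-1} M_2 \\ 0 & I_{n_2}}^{-1} = \pmat{I_{n_1} & M_1^{-1} M_2 \\ 0 & I_{n_2}},
\end{equation*}
as can be confirmed by a single $2\times 2$ block multiplication.

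Next I would reduce the claim to showing the equivalent block Gaussian elimination identity
\begin{equation*}
 \pmat{I_{n_1} & 0 \\ -M_3 M_1^{-1} & I_{n_2}} M \pmat{I_{n_1} & -M_1^{-1}M_2 \\ 0 & I_{n_2}} = \pmat{M_1 & 0 \\ 0 & M_4 - M_3 M_1^{-1} M_2}.
\end{equation*}
This is verified in two steps: the left multiplication replaces the second block row of $M$ by (second row) $- M_3 M_1^{-1} \cdot$(first row), producing a block matrix with $(2,1)$-entry $M_3 - M_3 M_1^{-1} M_1 = 0$, $(1,\cdot)$-entries $M_1, M_2$ unchanged, and $(2,2)$-entry $M_4 - M_3 M_1^{-1} M_2$. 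The subsequent right multiplication then replaces the second block column by (second column) $-$ (first column)$\cdot M_1^{-1} M_2$, which kills the $(1,2)$-entry $M_2 - M_1 M_1^{-1} M_2 = 0$ while leaving the already-zero $(2,1)$-entry and the diagonal blocks untouched. Both manipulations are just block analogues of elementary row/column operations and use only the invertibility of $M_1$.

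I expect no real obstacle here: the argument is purely algebraic, requires no norm or spectral considerations, and the cancellations are forced by the choice of the triangular factors. The only care needed is to respect the non-commutativity of matrix multiplication, which is why the lower-triangular factor uses $M_3 M_1^{-1}$ on the left and the upper-triangular factor uses $M_1^{-1} M_2$ on the right.
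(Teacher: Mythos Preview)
Your proposal is correct: the direct block-multiplication verification is exactly the standard argument, and your care about the order of the factors $M_3 M_1^{-1}$ versus $M_1^{-1} M_2$ is the only point where anything could go wrong. Note that the paper itself does not give a proof of this lemma at all --- it merely cites \cite[eq.~(0.8.5.3)]{HoJo13} --- so there is nothing to compare against; your argument is the expected one-paragraph verification.
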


\begin{lemma}[{Cauchy interlacing theorem for eigenvalues, cf.~\cite[Thm.~(4.3.28)]{HoJo13}}]\label{interlacing}
	Let $M\in \C^{n\times n}$ be a Hermitian complex matrix, such that
	\begin{equation*}
	M = \pmat{M_1 & M_2 \\ M_2^* & M_3},\quad M_1\in\C^{m\times m}, M_2\in \C^{m\times (n-m)}, M_3\in \C^{(n-m) \times (n-m)}.
	\end{equation*}
	Let the eigenvalues of $M$ and $M_1$ be ordered in non-decreasing order, then
	\begin{equation*}
	\lambda_i(M)\le \lambda_i(M_1) \le \lambda_{i+n-m}(M),\quad i=1,\dots,m.  
	\end{equation*} 
\end{lemma}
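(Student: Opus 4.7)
The plan is to derive both inequalities from the Courant--Fischer min--max characterization of eigenvalues of a Hermitian matrix. Recall that for a Hermitian $A\in\C^{k\times k}$ with eigenvalues $\lambda_1(A)\le\dots\le\lambda_k(A)$ and any $1\le i\le k$,
\begin{equation*}
 \lambda_i(A) = \min_{\substack{S\subset \C^k \\ \dim S = i}} \max_{\substack{x\in S \\ \|x\|=1}} x^*Ax = \max_{\substack{T\subset \C^k \\ \dim T = k-i+1}} \min_{\substack{x\in T \\ \|x\|=1}} x^*Ax.
\end{equation*}
The proof of this characterization itself is a short linear-algebra argument via spectral decomposition, and I would take it as given (it is \cite[Thm.~4.2.6]{HoJo13}, already cited in the paper).

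For the left-hand inequality $\lambda_i(M)\le \lambda_i(M_1)$, I would use the min characterization. Identify $\C^m$ with the subspace $W:=\{(y,0)\in\C^n : y\in\C^m\}$ of $\C^n$. For any subspace $S\subset \C^m$ of dimension $i$, its image $\iota(S)\subset W$ is an $i$-dimensional subspace of $\C^n$, and for $x=(y,0)\in\iota(S)$ we have $x^*Mx = y^*M_1 y$. Hence
\begin{equation*}
 \lambda_i(M) \le \max_{x\in \iota(S), \|x\|=1} x^*Mx = \max_{y\in S, \|y\|=1} y^*M_1 y,
\end{equation*}
and taking the minimum over all $i$-dimensional $S\subset\C^m$ yields $\lambda_i(M)\le \lambda_i(M_1)$.

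For the right-hand inequality $\lambda_i(M_1)\le \lambda_{i+n-m}(M)$, I would use the max characterization together with a dimension-counting argument. Let $T\subset \C^n$ be a subspace of dimension $n-(i+n-m)+1 = m-i+1$ realizing the max for $\lambda_{i+n-m}(M)$. Since $\dim W = m$ and $\dim T = m-i+1$, the intersection $T\cap W$ has dimension at least $m - i + 1 + m - n = (m-i+1)-(n-m)$... wait, the standard formula gives $\dim(T\cap W)\ge \dim T + \dim W - n = (m-i+1)+m-n$. That is not obviously positive. Let me instead swap roles: I take any $(m-i+1)$-dimensional subspace $S\subset\C^m$ and consider $\iota(S)\subset\C^n$, an $(m-i+1)$-dimensional subspace. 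Then
\begin{equation*}
 \min_{y\in S,\|y\|=1} y^*M_1 y = \min_{x\in\iota(S),\|x\|=1} x^*Mx \le \max_{\substack{T\subset\C^n\\ \dim T=m-i+1}} \min_{x\in T,\|x\|=1} x^*Mx = \lambda_{n-(m-i+1)+1}(M)=\lambda_{i+n-m}(M).
\end{equation*}
Taking the maximum over all such $S$ gives $\lambda_i(M_1)\le\lambda_{i+n-m}(M)$.

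There is no real obstacle here; the only thing to be careful about is matching indices in the two versions of the Courant--Fischer formula, which is why I would prefer the symmetric approach of using the \emph{min} characterization for the left inequality and the \emph{max} characterization for the right inequality, each applied to test subspaces pulled from $\C^m$ via the natural embedding $\iota$. This avoids any intersection-dimension arithmetic and keeps the two halves of the argument strictly parallel.
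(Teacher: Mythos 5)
The paper states this lemma without proof, citing Horn and Johnson \cite[Thm.~(4.3.28)]{HoJo13}, so there is no in-paper argument to compare against; your task was to supply a standard proof, and you have done so correctly. Your two-sided Courant--Fischer argument is exactly the textbook route: embed test subspaces from $\C^m$ into $\C^n$ via $\iota$, use the \emph{min-max} form for $\lambda_i(M)\le\lambda_i(M_1)$ and the \emph{max-min} form for $\lambda_i(M_1)\le\lambda_{i+n-m}(M)$, and observe that $x^*Mx=y^*M_1y$ whenever $x=\iota(y)$. The index bookkeeping is right: for $M_1\in\C^{m\times m}$, the max-min form reads $\lambda_i(M_1)=\max_{\dim S=m-i+1}\min_{y\in S,\|y\|=1}y^*M_1y$, and $n-(i+n-m)+1=m-i+1$ matches. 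Your aside about the intersection-dimension count ($\dim T+\dim W-n$ can be negative when $n\gg m$) correctly diagnoses why the naive intersection argument fails here and why switching to pushed-forward subspaces is the right fix; the final argument you settle on is clean and complete.
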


\begin{lemma}[{Block Gerschgorin theorem, cf.~\cite[6.1.P17]{HoJo13} or \cite[Thm.~5]{FeVa62}}]\label{blockGersch}
	Let $M \in \C^{nm\times nm}$ be an $m \times m$ block matrix with blocks $M_{ik}\in \C^{n\times n}$. Let the diagonal blocks $M_{ii}$ be normal and denote $\lambda_1^{(i)},\dots, \lambda_n^{(i)}$ their eigenvalues, respectively. Then the eigenvalues of $M$ are included in the set
	\begin{equation*}
	\bigcup_{i=1}^{n}\bigcup_{j=1}^{m} \{z\in \C \colon |z-\lambda_j^{(i)}| \le \sum_{k\ne i}\|M_{ik}\|\}.
	\end{equation*}
	In particular, we have for $M\in\C^{m\times n}$ the inequalities
	\begin{align*}
	\norm{\pmat{0 & M^* \\ M & 0}} &\le \norm{M}
	\;\text{and}&
	\norm{\pmat{I & 0 \\ M & I}}^2 &\le 1+\norm{M}+\norm{M}^2.
	\end{align*}
\end{lemma}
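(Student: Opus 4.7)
The plan is to treat the cited block Gerschgorin theorem as given and focus on deriving the two displayed operator norm inequalities as direct applications. Both inequalities are statements about the norm of a block $2\times 2$ matrix, so in each case I would first express the spectral norm (or its square) as the spectral radius of a Hermitian matrix, apply the block theorem to read off a disk enclosure of the spectrum, and then take the worst-case eigenvalue estimate.

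For the first inequality, observe that $\pmat{0 & M^* \\ M & 0}$ is already Hermitian, so its spectral norm equals the largest absolute value of its eigenvalues. Applying block Gerschgorin with the given $2\times 2$ partition, the diagonal blocks are zero matrices (trivially normal, with all eigenvalues at $0$), while each off-diagonal block has norm $\norm{M}$. Thus every eigenvalue lies in $\{z\in\C\colon \abs{z}\le \norm{M}\}$, and the inequality follows.

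For the second inequality, I would compute
\[
\pmat{I & 0 \\ M & I}^*\pmat{I & 0 \\ M & I} = \pmat{I + M^*M & M^* \\ M & I},
\]
whose spectral radius equals $\norm{\pmat{I & 0 \\ M & I}}^2$. The diagonal blocks $I+M^*M$ and $I$ are Hermitian with largest eigenvalues $1+\norm{M}^2$ and $1$, respectively, and both off-diagonal blocks have norm $\norm{M}$. Block Gerschgorin then places every eigenvalue in a disk of radius $\norm{M}$ around an eigenvalue of some diagonal block; the worst case is $1 + \norm{M}^2 + \norm{M}$, which matches the claim after taking square roots.

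The only formal obstacle I see is that the statement as cited is phrased for equal block sizes $n\times n$, whereas the applications involve blocks of possibly unequal sizes when $M\in \C^{m\times n}$ with $m\ne n$. This is easily resolved either by invoking the more general form of block Gerschgorin in \cite{HoJo13}, which permits arbitrary square diagonal blocks, or by padding with zero rows and columns to make the partition uniform without altering the nonzero spectrum. Beyond this bookkeeping, each inequality is a one-line disk estimate and no substantive difficulty is expected.
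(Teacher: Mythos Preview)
Your proposal is correct and is precisely the intended derivation of the two ``in particular'' inequalities from the block Gerschgorin enclosure; the paper itself provides no proof for this lemma, treating both the general statement and the two corollaries as citable facts from \cite{HoJo13,FeVa62}. Your observation about the mismatch in block sizes when $m\ne n$ is accurate and your suggested fixes (the more general form in \cite{FeVa62} or zero-padding) are both standard and adequate.
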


\end{document}